\newtheorem*{rep@theorem}{\rep@title}
\newcommand{\newreptheorem}[2]{
\newenvironment{rep#1}[1]{
\def\rep@title{#2 \ref{##1}}
\begin{rep@theorem}}
{\end{rep@theorem}}}
\newtheorem{theorem}{Theorem}[section]
\newtheorem{lemma}[theorem]{Lemma}
\newtheorem{proposition}[theorem]{Proposition}
\newtheorem{prop*}{Proposition}
\newtheorem{corollary}[theorem]{Corollary}
\theoremstyle{remark}
\newtheorem{remark}[theorem]{Remark}
\newtheorem{claim}[theorem]{Claim}
\def \bdry {\partial}
\def \x {\times}
\newcommand{\CC}{{\mathbb C}}
\newcommand{\ZZ}{{\mathbb Z}}
\newcommand{\CP}{{\mathbb CP}^2}
\newcommand{\bCP}{{\overline{\mathbb CP}}^2}
\DeclareMathOperator{\MCG}{MCG}
\DeclareMathOperator{\Ab}{Ab}
\DeclareMathOperator{\Sym}{Sym}
\DeclareMathOperator{\ind}{ind}
\def \A {\mathcal{A}}
\def \t {\tau}
\def \b {\beta}
\newcommand{\bit}{\begin{itemize}}
\newcommand{\eit}{\end{itemize}}
\newcommand{\ben}{\begin{enumerate}}
\DeclareRobustCommand{\een}{ \end{enumerate} }
\title{Splitting Algebraic Singular Fibrations via Perturbation of Branch Covers
}
\author[S.\ Sakall{\i}]{S\"umeyra Sakall{\i}}
\email{ssakalli@uark.edu}\urladdr{https://sites.google.com/umn.edu/ssakalli/home}
\address{Department of Mathematical Sciences, University of Arkansas, Fayetteville, AR 72701, USA}
\author[Jeremy Van Horn-Morris]{Jeremy Van Horn-Morris}\email{jv002@uark.edu}\urladdr{https://jv002.hosted.uark.edu}
\address{Department of Mathematical Sciences, University of Arkansas, Fayetteville, AR 72701, USA}
\begin{document}
\begin{abstract} 
In a previous paper \cite{SV}, the authors studied the isolated singular fibers that can occur in algebraic fibrations of certain genus two fibrations. There the goal was to determine their monodromy factorizations with the goal of determining a dictionary between a set of curve configurations and certain words in the mapping class group. Each such curve configuration was originally cataloged by Namikawa and Ueno \cite{NamikawaUeno-list} in their list of genus two fibrations. We studied four families of polynomials, we restricted to fibrations whose fibers have boundary, considering the isolated affine singularity referenced in \cite{NamikawaUeno-list}. We resolved the singularities and, using carefully chosen perturbations, deformed them into Lefschetz fibrations and determined their monodromy factorizations. In two of those families, we also obtained strong information about how the central fiber compactifies in a fibration with closed fibers. In this paper we work on the other two cases by recreating the singular fibers using a different family of polynomials. In \cite{SV}, all the algebraic curves in the fibrations were given expressly as hyperelliptic equations. We utilized this symmetry both to construct the deformation and to recover the monodromy factorization. In this paper the curves are no longer expressly hyperelliptic--the quotient curve is no longer just $\CC$, the branch curves are no longer embedded in $\CC^2$, and the fibrations utilized in the quotient are now more complicated. We do, though, recover the desired behavior of the compactification of the central fiber, its deformation to a Lefschetz fibration, and the corresponding monodromy factorization.

\end{abstract}

\keywords{Complex singularities, Symplectic manifolds, Lefschetz fibrations, Monodromy factorizations}

\maketitle
\section{Introduction}

In \cite{SV} we studied the resolution spaces of four different families, $\phi_1, \phi_2, \phi_3, \phi_4$, of singular algebraic varieties. These spaces admit genus two fibrations each with one singular fiber. We showed that they also admit Lefschetz fibrations whose generic fibers are genus two surfaces (with either one or two boundary components depending on the family). For each member of the four families, we found a flat deformation from the resolution to the Lefschetz fibration and determined the corresponding monodromy factorization. For the $\phi_2$ family, which consists of the resolution spaces of the zero sets of polynomials $y^2 = x(x^4 + t^k)$, $k = 1,\dots, 8$, the fiber is of genus two and has one boundary component. To find the monodromy factorizations we worked with deformations that were compatible with the branched double covers coming from the hyperelliptic involution. Each curve in the fibration is hyperelliptic, and for this family, the fiber over $t=1$ is the curve $y^2 = x(x^4+1)$, which is the branched double cover of $\CC$ branched over the points $x(x^4+1)=0$. The singular variety covers $\CC^2$ and the branch locus was singular. The process of finding the deformation involved certain constrained deformations of the branch locus, either resolving or simplifying the singularities. While there is certainly a deformation of the branch locus to a smooth curve, the corresponding deformation of the singular variety is not always the resolution. As we are interested in studying the resolution, we took particular care when selecting the deformation.

In \cite{NamikawaUeno-list}, Namikawa and Ueno suggest a list of 120 different singular fibers that can arise in a family of genus two curves. They restrict to the projective case. Their initial paper lists an affine polynomial for each singular fiber with an implied claim that the family of projective curves can be resolved so that the central fiber is the curve configuration given in the list. This family has a rather complicated singularity along the points in the projective closure. In the second part of their work, \cite{NamikawaUeno-long}, Namikawa and Ueno construct the first 18 singular fibers via a very different method. These singularities all are associated with periodic symmetries and they first construct a curve with the correct symmetries, then form a quotient corresponding to the specific singular fiber, and then resolve the quotient singularities using Hirzebruch-Jung. After this, they arrive at the correct curve configuration for the central fiber. Matsumoto and Montesinos \cite{MatsumotoMontesinos-book} follow this second path and, using ideas of Nielsen and Thurston to make sense of the quotient orbifolds and Hirzebruch-Jung to resolve, see how the foliations on the product space at least topologically extend to yield the central fiber shown in Namikawa-Ueno. Their analysis holds for all types of pseudo-periodic diffeomorphisms of the closed genus 2 surface. However, their construction is topological rather than algebraic and not particularly amenable to the tools of deformation. 

To get around this complication, we instead work in the affine setting. In the previous work, \cite{SV}, we started with the polynomials suggested by Namikawa and Ueno. We used an algorithm by Nemethi to obtain the resolution. We then found flat deformations of the fibrations and determined their monodromy factorizations, using tools from contact topology to invoke Laufer's theorem on simultaneous resolutions. These resolution graphs gave us the required information to determine the monodromy factorizations for the four families. 

In this paper, we consider the family $\psi_1^k: t^k = x(x^3-y^2)$, $k = 1,\dots, 8$, and refer to it collectively as the $\psi_1$ family. We calculate the resolutions using Nemethi's algorithm where the covering now occurs over the central fiber in the fibration. The curves that correspond to fixed values of $t$ are no longer hyperelliptic in the classical sense but they do admit an involution whose quotient is a rational curve (in this case, biholomorphic to $\CC^\x$). Hence each singular variety can be built as the double branched cover over an annulus fibration branched over some braided multisection. We determine the correct deformations of the section to induce a flat deformation of the resolution of the singular surface so that the induced fibration is Lefschetz, splitting the more complicated fiber over $t=0$ into Lefschetz singularities. We then find their corresponding monodromy factorizations. Because we again care about studying the fibration on the resolution, we have to take particular care in choosing which deformation to make. Care was made to determine that the resulting deformation of the branch locus lifts to a flat deformation of the resolution. First we determine the topological type of the deformed surface, then we appeal to Laufer (see Theorem~\ref{thm:laufer}) and use some tools from contact topology to show that Laufer's theorem can be applied (see Proposition~\ref{prop:SV}). 

For the families $\phi_1$ and $\phi_3$, a surprising benefit of Nemethi's algorithm was that it also gave interesting hints that the projective curve configuration of Namikawa Ueno was indeed the resolution of the projectivization. Knowing how the fibration on the affine picture embeds in the projective picture tells you how the sections at infinity interact with the fibration and how they intersect the central fiber. The output of the resolutions in the affine case gave a result that was tantalizingly close to the exact central fiber in the projective fibration used by Namikawa and Ueno, in that its compactification, capping off the one or two noncompact components with disks, is exactly the configuration of the projective case. However Nemethi's algorithm did not apply in the same way to the families $\phi_2$ and $\phi_4$. (Nor, for our original purposes, did it need to.) Nemethi's algorithm uses a cyclic action and the resulting plumbing diagram includes the information about the fixed point locus of the action. For the families $\phi_1$ and $\phi_3$, the cyclic action is a rotation around the singular fiber and Nemethi's algorithm then returns the central fiber of the resolution with a complete description of the multiplicities of the covering by the smooth fiber on both the compact and non-compact components. The latter of which will hold the key to understanding how sections of the fibration intersect the singular fiber. For the families $\phi_2$ and $\phi_4$, we instead used the involution on the singular space induced by the hyperelliptic involution of the fibers, and found their monodromy factorizations. Because the fixed point locus of this action is not the central fiber, Nemethi's algorithm tells us less about the central fiber of the resolution. We did not access to the non-compact components of the central fiber and did not see how the affine fibration embeds in the projective one.

With that in mind, in this paper we find a new family of polynomials $\psi_1^k$ that recover the central singular fibers at $\phi_2^k$ for each $k=1, \cdots, 8$, (fibers of type VII, VI, VII, I*, VII*, VI, VII*, and the generic fiber $\Sigma_2$, respectively). The multiplicities of each irreducible component of $\psi_1^k$ match up with those of Namikawa-Ueno fibers, and our polynomials are different than what Namikawa-Ueno have. We also note the relationship between $\phi_2^k$ and $\psi_1^k$. The resolution graph for $\phi_2^k$ is a subset of the resolution graph for the corresponding $\psi_1^k$ for each $k=1, \cdots, 8$.

In outline, after giving background in Section \ref{background}, we find the resolution of the single compound singularity of each $\psi_1^k$ over the origin in Section~\ref{sec:resolutions}. Then we use these to prove our main Theorem \ref{thm:main2} below, the proof of which we complete in Section \ref{psik}.

\begin{theorem} \label{thm:main2}
The genus two fibration on the resolution $X$ of a singular algebraic variety $V(f)$ where $f$ is 
\[t^k = x(x^3-y^2), \; k = 1,\dots, 8,\]

splits into a Lefschetz fibration described by one of the following positive words in the mapping class group of the genus two surface with 2 boundary components:

\begin{itemize}
\item $\psi_1 = \tau_4 \tau_3 \tau_2 \tau_1 \tau_{1'}$, 
\item $\psi_1^2 = (\tau_4 \tau_3 \tau_2 \tau_1 \tau_{1'})^2$,
\item $\psi_{\tilde{1}} = \tau_2 \tau_3 \tau_4 \tau_5 \tau_{5'} \tau_{\partial_1}$,  
\item $\psi_1^4=\psi_{\tilde{1}} \psi_{{1}}$, 
\item $\psi_1^5= \psi_{\tilde{1}} \psi_{{1}}^2$, 
\item $\psi_1^6 = \psi_{\tilde{1}}^2$, 
\item $\psi_1^7=\psi_{\tilde{1}}^2\psi_{{1}}$, 
\item $\psi_1^8=\tau_{\partial_1}^3\tau_{\partial_2}$
\end{itemize} 
where the labeling agrees with the labeling of the Dehn twist curves on the surface $\Sigma_{2,2}$ shown in Figure~\ref{fig:branched cover} and $\tau_{\bdry_{i}}$ stand for the boundary multitwists on the surface $\Sigma_{2,2}$.
\end{theorem}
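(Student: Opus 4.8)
The plan is to realize the genus two fibration on the resolution $X$ as a double branched cover of a much simpler annulus fibration, and then to extract the vanishing cycles of a controlled perturbation through the standard correspondence between half-twists of branch points and Dehn twists in the double cover. Starting from the resolution computed in Section~\ref{sec:resolutions}, I would first exhibit on each fiber the involution $(x,y)\mapsto(x,-y)$; the quotient is parametrized by $x$, which ranges over $\CC^{\times}$, so the smooth fiber $\Sigma_{2,2}$ is presented as a double cover of an annulus branched over the four fixed points $x^4=t^k$, and the whole singular variety $V(f)$ becomes a double cover of an annulus fibration over the $t$-disk, branched along a four-strand braided multisection. A check of the covering monodromy over the two ends $x=0$ and $x=\infty$ shows each annular boundary lifts connectedly, producing exactly the two boundary components of $\Sigma_{2,2}$. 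The governing principle is that a half-twist exchanging two branch points lifts to the Dehn twist along the curve covering an arc joining them, and that a collision of two branch points in the base lifts to a nodal degeneration whose vanishing cycle is that lift.

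Next I would compute the braid monodromy of the branch locus around $t=0$. Since the four branch points are $x=t^{k/4}\zeta$ with $\zeta^4=1$, circling the origin once rotates them rigidly by angle $\pi k/2$, so the monodromy is a power of the rotation braid on four points in the punctured disk. To split the central fiber I would perturb the branch locus so that this rotation is decomposed into generic elementary collisions (simple tangencies and transverse double points of the multisection), each contributing one vanishing cycle; matching the connecting arcs with the labeled curves of Figure~\ref{fig:branched cover} identifies these cycles as the Dehn twists $\tau_1,\tau_{1'},\tau_2,\tau_3,\tau_4,\tau_5,\tau_{5'}$, and reading the collisions in the order prescribed by the braid yields the listed positive words. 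The $\ZZ/k$ symmetry $t\mapsto \zeta t$ organizes the braid into repeated angular sectors, so the perturbation can be carried out sector-by-sector and the resulting factorizations concatenate; this is what produces the two building blocks $\psi_1$ and $\psi_{\tilde1}$ and relations such as $\psi_1^4=\psi_{\tilde1}\psi_1$ and $\psi_1^6=\psi_{\tilde1}^2$. The boundary multitwists $\tau_{\bdry_1},\tau_{\bdry_2}$ would be read off from the framings and self-intersections recorded in the resolution graph; in particular the case $k=8$, where the central fiber is already the generic $\Sigma_2$, contributes only the boundary monodromy $\tau_{\bdry_1}^3\tau_{\bdry_2}$.

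The crucial and most delicate step is to verify that the chosen perturbation of the branch locus genuinely lifts to a \emph{flat} deformation of the resolution $X$, and not to some other smoothing or partial resolution of $V(f)$. Here I would first pin down the topological type of the perturbed total space, and then invoke Laufer's theorem (Theorem~\ref{thm:laufer}), checking its hypotheses through the contact-topological criterion of Proposition~\ref{prop:SV} so as to guarantee that the deformed Lefschetz fibration and the resolution are related by a simultaneous resolution and therefore share the same central-fiber behavior. The main obstacle is twofold. First, one must select a perturbation that is simultaneously generic enough to be Lefschetz and constrained enough to preserve the resolution; because the quotient is now an annulus rather than a disk and the branch curves are not embedded in $\CC^2$, the arguments of \cite{SV} do not transfer directly and the constraint analysis must be redone. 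Second, one must confirm that the Dehn-twist words coming from the braid monodromy coincide, as elements of the mapping class group of $\Sigma_{2,2}$, with the compact positive factorizations asserted in the statement; this reconciliation will rely on braid, chain, and hyperelliptic relations to pass between the two descriptions.
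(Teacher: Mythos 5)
Your overall framework is the same as the paper's (present the fibration as a double branched cover of an annulus fibration, perturb the braided branch multisection, lift half-twists to Dehn twists, and certify flatness via Laufer's Theorem~\ref{thm:laufer} through Proposition~\ref{prop:SV}), but two of your steps fail concretely. First, your only mechanism for producing vanishing cycles is a generic collision or tangency of branch points, whose lifted vanishing cycle is the preimage of an embedded arc joining two branch points. Such a lift can never be boundary-parallel in $\Sigma_{2,2}$: its image under the covering projection traverses an arc forward and back, so its class pushes forward to $0$ in $H_1$ of the annulus, whereas each $\partial_i$ of $\Sigma_{2,2}$ pushes forward to twice a generator. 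Hence no factorization read off from generic collisions can contain the factor $\tau_{\partial_1}$ appearing in $\psi_{\tilde{1}}$, nor any factor of $\psi_1^8=\tau_{\partial_1}^3\tau_{\partial_2}$, and your suggestion to ``read off'' the boundary multitwists from framings in the resolution graph is not a derivation. In the paper, $\psi_{\tilde{1}}$ arises from a deformation in the \emph{fiber} direction, $t^3 = x(x^2(x+s)+y^2)$, which keeps a non-generic fiber over $t=0$ by splitting off the lower-genus singularity $t^3=x(x^2-y^2)$; one must then resolve that singularity separately (Section~\ref{sec:psi0}) and prove a uniqueness lemma (Claim~\ref{lem:fact1n}) stating that the boundary multitwist in $\MCG(\Sigma_{1,n})$ factors into $n$ positive twists only as the $n$ boundary twists. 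None of this machinery appears in your proposal.

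Second, your ``sector-by-sector'' concatenation---splitting the $t^k$ factor into distinct roots---is exactly the kind of deformation that the Laufer criterion you invoke would reject in the remaining cases. For $k=3$, splitting into three $\psi_1$ singularities gives a word of length $15$, hence a $4$-manifold with $b_2=10$, while the resolution has $b_2=b_2^-=1$; for $k=8$, the analogous splitting gives $\psi_{\tilde{1}}^2\psi_1^2$ of length $22$, hence $b_2=17$ and $b_1 = 0$, while the resolution has $b_1=4$ and $b_2=b_2^-=3$. In both cases the deformation of $V(f)$ exists but is \emph{not} a flat deformation of the resolution $X$, so Proposition~\ref{prop:SV} fails for it; this is precisely why the paper treats $k=8$ by a completely different argument (compute the monodromy as a lifted push map, use the resolution's Betti numbers to force a factorization into exactly four separating twists, then prove uniqueness of that factorization using the abelianization $\ZZ/10\ZZ$ of $\MCG(\Sigma_2)$). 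A smaller point: reconciling the factorizations coming from different choices of paths to the split singular fibers must be done by Hurwitz moves (Lemma~\ref{lem:hurwitz}); invoking braid, chain, or hyperelliptic \emph{relations}, as you propose, can change the positive word as a factorization and therefore the Lefschetz fibration it describes, even though the product in the mapping class group is unchanged.
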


Here, we consider factorizations in the mapping class group $$\mathrm{MCG}(\Sigma_{2,2}) := \mathrm{Diff}^+(\Sigma_{2,2}, \bdry  \Sigma_{2,2})/\text{isotopy rel boundary}.$$ The Dehn twists $\tau_1, \tau_{1'}, \tau_2, \cdots, \tau_5, \tau_{5'}$ are the standard generators of the hyperelliptic subgroup of the mapping class group of the genus two surface with 2 boundary components as shown in Figure~\ref{fig:branched cover}. The monodromy $\psi_1$ is a root of the boundary twist of order $8$.

We also give a new lift of the hyperelliptic involution to the mapping class group of $\Sigma_{2,2}$ (see Section \ref{sec:hyperelliptic} for the proof):

\begin{proposition}\label{newlift} The following relation holds in the mapping class group of the surface $\Sigma_{2,2}$
\[(\tau_2 \tau_3 \tau_4 \tau_5 \tau_{5'} \tau_4 \tau_3 \tau_2 \tau_1\tau_{1'})^2 = \tau_{\partial_1} \tau_{\partial_2}\]
and moreover, the mapping class element represented by $\tilde{I} = \tau_2 \tau_3 \tau_4 \tau_5 \tau_{5'} \tau_4 \tau_3 \tau_2 \tau_1\tau_{1'}$ is isotopic to the involution on $\Sigma_{2,2}$ with four fixed points whose quotient is the annulus. 
\end{proposition}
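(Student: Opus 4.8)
The plan is to prove both assertions at once by realizing $\tilde I$ as the deck transformation of the branched cover that presents $\Sigma_{2,2}$, and then reducing the Dehn-twist word to a computation downstairs. An Euler characteristic count, $\chi(\Sigma_{2,2}) = -4 = 2\chi(A) - 4$ with $A$ the annulus, shows that $\Sigma_{2,2}$ is the double cover of $A$ branched over four points; let $\sigma$ denote the deck transformation, an orientation-preserving involution whose four fixed points lie over the branch points and whose quotient orbifold is $A$. The first step is to fix the configuration of Figure~\ref{fig:branched cover} by choosing explicit arcs in $A$: arcs $a_2,a_3,a_4$ joining consecutive branch points together with forked pairs of arcs at the two ends, arranged so that the curves $c_1,c_{1'},c_2,c_3,c_4,c_5,c_{5'}$ are exactly their preimages and each $\tau_{c_i}$ is the $\sigma$-equivariant lift of the half-twist $h_{a_i}$ interchanging the endpoints of $a_i$.

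With this dictionary in place, the second step identifies $\tilde I$ geometrically. Since the lift of a product of half-twists is the product of their equivariant lifts, $\tilde I = \tau_2\tau_3\tau_4\tau_5\tau_{5'}\tau_4\tau_3\tau_2\tau_1\tau_{1'}$ is the equivariant lift of the product $H = h_{a_2}h_{a_3}h_{a_4}h_{a_5}h_{a_{5'}}h_{a_4}h_{a_3}h_{a_2}h_{a_1}h_{a_{1'}}$ in the mapping class group of $(A,4\text{ points})$. I would show that $H$ is isotopic to the identity of $A$ up to rotation of the boundary circles, so that $\tilde I$ is one of the two lifts of $\mathrm{id}_A$, namely $\mathrm{id}_{\Sigma_{2,2}}$ or $\sigma$. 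Because each $\tau_{c_i}$ is supported in the interior, $\tilde I$ fixes $\partial\Sigma_{2,2}$ pointwise, so it is the representative $\tilde\sigma$ of $\sigma$ corrected in the boundary collars by a fractional rotation; that $\tilde I$ is $\tilde\sigma$ and not the identity is confirmed by matching the permutation it induces on the seven curves with that induced by $\sigma$. This already yields the ``moreover'' clause: dropping the boundary correction (an isotopy in $\mathrm{Diff}^+(\Sigma_{2,2})$, not rel $\partial$) carries $\tilde\sigma$ to the honest involution $\sigma$, which has exactly four fixed points and annulus quotient.

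The third step computes the square, where all the nontriviality lives. On the complement of the boundary collars $\tilde\sigma^2 = \sigma^2 = \mathrm{id}$. Inside a collar $S^1\times[0,1]$ with boundary at $s=0$, the deck transformation is the $\pi$-rotation $(\theta,s)\mapsto(\theta+\pi,s)$ (the boundary circle double-covers $\partial A$), and the correction needed to fix the boundary pointwise is the fractional twist $(\theta,s)\mapsto(\theta-\pi(1-s),s)$, so that
\[
\tilde\sigma(\theta,s) = (\theta+\pi s,\, s), \qquad \tilde\sigma^2(\theta,s) = (\theta + 2\pi s,\, s),
\]
which is precisely one full Dehn twist about that boundary circle. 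Doing this in both collars gives $\tilde I^2 = \tilde\sigma^2 = \tau_{\partial_1}\tau_{\partial_2}$, the desired relation. As a consistency check, capping both boundary circles sends $\tilde\sigma$ to the central hyperelliptic involution of the closed genus-two surface and kills $\tau_{\partial_1}\tau_{\partial_2}$, in accordance with $\iota^2=\mathrm{id}$.

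The main obstacle is the isotopy claim in the second step: verifying that the precise word, with ordering $2,3,4,5,5',4,3,2,1,1'$ and forked ends $1,1'$ and $5,5'$, descends to the trivial braid on $A$ (up to boundary rotation) rather than to a mapping class differing by interior Dehn twists. Pinning down the forks and the exact order is what makes the lift an honest involution up to the boundary correction, and this braid computation is the crux of matching the algebraic word to the geometric involution. As an independent check on the relation itself, I would run the purely algebraic route, rewriting $\tilde I^2$ using the chain relation for the subchain $c_1,c_2,c_3,c_4,c_5$ together with the lantern relations encoding the two forks; reconciling that with the palindromic word is itself delicate, so it serves only to corroborate the geometric computation.
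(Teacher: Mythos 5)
Your geometric framework---Birman--Hilden lifting over the annulus, identifying $\tilde{I}$ with a boundary-corrected deck transformation $\tilde\sigma$, and computing $\tilde\sigma^2$ in collars---is the right picture, and it is the same picture that underlies the paper. But there is a genuine gap, and it sits exactly where all of the content of the relation lives. Your second step concludes only that the downstairs word $H$ is ``isotopic to the identity of $A$ up to rotation of the boundary circles.'' That statement determines $H$ rel boundary only up to integer powers of the boundary twists of $A$, and hence (by Birman--Hilden injectivity) determines $\tilde{I}$ rel boundary only up to integer powers of $\tau_{\partial_1},\tau_{\partial_2}$ upstairs. Consequently $\tilde{I}^2$ is pinned down only up to \emph{even} powers of boundary twists: your argument as stated cannot distinguish $\tilde{I}^2=\tau_{\partial_1}\tau_{\partial_2}$ from $\tau_{\partial_1}^{1+2m}\tau_{\partial_2}^{1+2n}$ for arbitrary integers $m,n$ (and soft invariants do not rescue this: capping kills all boundary twists, and the abelianization of $\MCG(\Sigma_{2,2})$ only constrains $m+n$). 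Your collar formula $\tilde\sigma(\theta,s)=(\theta+\pi s,s)$ silently chooses the correction to be a single \emph{positive} half-rotation at each boundary component; that choice is equivalent to asserting that the fractional Dehn twist coefficient of $\tilde{I}$ at each boundary component is exactly $+1/2$, which is precisely what must be proved, not assumed. (Symptom: the identical argument with the correction $(\theta,s)\mapsto(\theta-\pi s,s)$ would ``prove'' $\tilde{I}^2=\tau_{\partial_1}^{-1}\tau_{\partial_2}^{-1}$.) So the braid computation you defer must deliver a sharper output than the one you name as the crux: not ``$H$ is trivial up to boundary rotation,'' but ``$H$ equals, rel boundary, exactly one positive full twist about each boundary component of $A$.''

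This sharper statement is exactly what the paper establishes, by a different route: it writes $\tilde{I}=\psi_{\tilde{1}}\psi_1\tau_{\partial_1}^{-1}$, computes the quotient braid of $\psi_1^4=\psi_{\tilde{1}}\psi_1$ \emph{exactly}---by tracing the image of an arc, as in Section~\ref{sec:psi8}---to be a full-circle point push together with four positive twists about $\partial_1$, i.e.\ precisely $\tau_{\partial_1}^3\tau_{\partial_2}$ downstairs; lifting (using that $\tau_{\partial_i}^2$ on $A$ lifts to $\tau_{\partial_i}$ on $\Sigma_{2,2}$) gives $(\psi_{\tilde{1}}\psi_1)^2=\tau_{\partial_1}^3\tau_{\partial_2}$ in $\MCG(\Sigma_{2,2})$, whence centrality of boundary twists yields $\tilde{I}^2=(\psi_{\tilde{1}}\psi_1)^2\tau_{\partial_1}^{-2}=\tau_{\partial_1}\tau_{\partial_2}$. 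Tracing an arc (a partial cut system) is precisely the device that pins down the integer boundary winding your plan leaves undetermined; if you import that computation, your collar analysis then correctly delivers both the relation and the ``moreover'' clause, and your version has the virtue of being purely mapping-class-theoretic, with no reference to the singularities. One smaller flaw: ruling out $\tilde{I}\simeq\mathrm{id}$ by ``the permutation it induces on the seven curves'' does not work, since the deck transformation preserves each curve $c_i$ setwise exactly as the identity does; use instead the action on $H_1(\Sigma_{2,2})$ (where the involution acts by $-\mathrm{id}$), or simply note that a nonempty product of positive Dehn twists is never isotopic to the identity.
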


\section{background}
{\bf Resolutions and deformations} 
\label{background}

Our goal in this paper is to understand a singular fibration on the resolution of certain hypersurface singularities $V(f) \subset \CC^3$, for the polynomials $f(x,y,t) = t^k-x(x^3-y^2)$. Most of the varieties $V(f)$ are singular, having an isolated singularity at the origin. If we intersect the variety $V(f)$ with the hyperplanes $t=\mathit{const}$ we get a fibration of $V(f)$ by the curves $x(x^3-y^2)=t^k$ parametrized by $t$, each contained in the corresponding hyperplane. Each of these curves is noncompact, of genus 2, and with ends (which we will typically treat as boundary components). The fiber over $t=0$ is singular, but fibers are otherwise smooth. In Section \ref{sec:resolutions}, we calculate the resolutions, $X = X_f$, of each of these hypersurfaces using an algorithm of Nemethi \cite{Nemethi-lectures}. The resolution inherits the fibration above and the \emph{central fiber}, the fiber over $t=0$, encodes the resolution \cite{mumford}. We record the topological type of the central fiber as a plumbing diagram with arrowheads to indicate the noncompact components. (The results are summarized in Figures \ref{VIIalt}, \ref{VIalt}, \ref{VII*alt}, \ref{phi2differentpoly}, \ref{table2}.) 

We additionally want a description of the deformation of this fibration into a \emph{Lefschetz fibration}. Rather than having a single complicated singular fiber at $t=0$, a Lefschetz fibration has many singular fibers which are all nodal curves, as nice a singularity as one could hope. One benefit of working with a Lefschetz fibration is that, up to symplectic deformation, they are uniquely determined by their corresponding \emph{positive monodromy factorization}, but a second is that they are a ubiquitous \cite{Donaldson} tool for studying symplectic 4-manifolds and there is a large literature of computational techniques and applications. Genericity says that our original singular fibration can be perturbed to a Lefschetz fibration, but care is required to keep track of the relative positions of the nodes and recover the corresponding monodromy factorization. We choose to do this directly via a deformation.

The particular (sometimes frustrating) difficulty when using deformation in the problem above is that significant care must be utilized to ensure that the deformation preserves the symplectic (or even topological) type of $X$. Indeed, for all of our polynomials, there is a deformation from $V(f)$ to some smooth hypersurface of $\CC^3$ given by some generic perturbation of $f$. This induces a deformation of the resolution $X$, and its corresponding fibration, to some smoothing of $V(f)$. And this occurs even though typically $X$ and the smoothing are not even homeomorphic. 

To deform the fibration on $X$, we use a symmetry of the fibration inherent in its defining polynomial to describe both $V(f)$ and $X$ as the double branched cover of some other hypersurface in $\CC^3$, branched over a curve $\Delta$ which is transverse to the same fibration by $t=\mathit{const}$. We deform $\Delta$ within the hypersurface and resolve each of the singularities of the cover. This is the family that we can control and that we can guarantee gives a flat deformation $X_s$ of $X$ into the Lefschetz fibration by choosing the correct deformation $\Delta_s$ of $\Delta$.

To prove that the given deformation is flat, we utilize the following theorem of Laufer:

    \begin{theorem}[Laufer \cite{Laufer-weak}, Theorem 5.7]  \label{thm:laufer} Let $\lambda: \mathcal{V} \to T$ be the germ of a flat deformation of the normal Gorenstein two-dimensional singularity $(V,p)$, with $T$ a reduced analytic space. Then $\lambda$ has a very weak simultaneous resolution, possibly after finite base change, if and only if $K_s \cdot K_s$ for $s \in T$ is constant. 
    \end{theorem}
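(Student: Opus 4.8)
The statement is quoted from Laufer, so in practice one simply invokes it; nonetheless, here is the line of argument I would reconstruct to prove it. The plan is to treat the two implications separately, the forward direction (existence of a very weak simultaneous resolution forces $K_s\cdot K_s$ to be constant) being the elementary one and the converse carrying the substantive content. Throughout, I read $K_s\cdot K_s$ as the self-intersection of the numerical canonical cycle $Z_{K_s}$ supported on the exceptional divisor of a (minimal good) resolution of $(V_s,p)$, the cycle being determined by the negative-definite intersection matrix together with adjunction $Z_{K_s}\cdot A_j = 2p_a(A_j)-2-A_j^2$.

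For the forward direction, suppose that after a finite base change $\lambda$ admits a very weak simultaneous resolution $\Pi\co \mathcal M \to \mathcal V$, so that each $\Pi_s\co M_s \to V_s$ is a resolution and $\mathcal M \to T$ is flat with these resolutions as fibers. I would argue that the very notion of a simultaneous resolution keeps the exceptional configuration---its dual graph, the self-intersections, and the arithmetic genera of its components---constant along $T$. Since $Z_{K_s}\cdot Z_{K_s}$ is a function of exactly this combinatorial and intersection data, it is locally constant on the reduced germ $T$, hence constant.

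The converse is the heart of the matter, and this is the step I expect to be the main obstacle. First I would reduce to $T$ a smooth disc by slicing with generic one-parameter families, noting that constancy of $K_s\cdot K_s$ passes to slices. Next I would resolve the total space, producing $\sigma\co \widetilde{\mathcal V}\to \mathcal V$ whose generic fiber $\widetilde V_t$ resolves $V_t$, but whose special fiber $\widetilde V_0$ is a priori a \emph{non-minimal} resolution of $V_0$, carrying extra exceptional curves created while resolving the total space. The discrepancy between $\widetilde V_0$ and the minimal resolution of $V_0$ shows up precisely as a change in the canonical self-intersection, with the minimal resolution extremal. The crux is to show that the hypothesis that $K_s\cdot K_s$ is constant forbids these extra vertical components, so that $\sigma$ already restricts to a fiberwise resolution of the required type.

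To run this comparison I would combine Noether's formula $12\chi(\mathcal O)=K^2+c_2$ in its relative form on $\widetilde{\mathcal V}\to T$ with the upper semicontinuity of the geometric genus $p_g(V_s)=\dim_{\CC}(R^1(\Pi_s)_*\mathcal O_{M_s})_p$, obtained via Grauert's direct-image theorem together with Grauert--Riemenschneider vanishing. Feeding constancy of $K_s\cdot K_s$ through the Gorenstein relation tying $K_s^2$, $p_g(V_s)$, and the topological Euler characteristic of the exceptional set pins down $p_g$ and the topology of the special exceptional fiber, forcing the special fiber of $\sigma$ to be minimal and thereby yielding the simultaneous resolution. The residual technical points I would treat with care are: (i) the finite base change, needed because the exceptional curves of $\widetilde V_0$ may be permuted by the monodromy of the family or defined only over a cover of $T$; and (ii) the \emph{very weak} nature of the conclusion, which permits $\mathcal M$ to be non-normal and demands only a fiberwise resolution, so that the blow-down assembling $\mathcal M$ from $\widetilde{\mathcal V}$ need not smooth the total space. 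Checking that Gorensteinness is preserved along the family, so that the canonical cycle and the Gorenstein relation apply uniformly, is exactly where the normal-Gorenstein hypotheses are genuinely used.
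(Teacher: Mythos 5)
This statement is not proved in the paper at all: it is Laufer's Theorem 5.7, quoted verbatim from \cite{Laufer-weak} and used purely as a black box (its only role is to feed into Proposition~\ref{prop:SV}, where the hypothesis ``$K_s\cdot K_s$ constant'' is converted into the checkable conditions on $b_1$, $b_2^{\pm}$). So there is no proof in the paper to compare your reconstruction against; the paper's ``approach'' is citation, and for the purposes of this paper that is the correct approach.

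Assessed on its own terms, your sketch of the converse has roughly the right flavor (resolving the total space, Noether's formula, semicontinuity of $p_g$ via Grauert--Riemenschneider, the Gorenstein hypothesis entering through the canonical cycle), but your forward direction rests on a false premise. A \emph{very weak} simultaneous resolution does \emph{not} keep the exceptional configuration --- dual graph, self-intersections, genera --- constant along $T$; that extra rigidity is essentially what distinguishes a \emph{weak} simultaneous resolution, and Laufer's companion result shows that weak simultaneous resolution corresponds to the strictly stronger condition that both $K_s\cdot K_s$ and the topological Euler characteristic $\chi_T(E_s)$ of the exceptional set are constant. A standard example makes the gap concrete: a family of simple elliptic singularities of degree $d$ degenerating to a cusp singularity with $K^2=-d$ admits a very weak simultaneous resolution after base change, yet the exceptional set jumps from a smooth elliptic curve to a cycle of rational curves, so the dual graph and $\chi_T$ are not locally constant even though $K_s\cdot K_s$ is. Consequently your argument, if it worked, would prove the criterion for \emph{weak} simultaneous resolution, which is a different (and false, as stated for very weak) theorem. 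The actual forward direction must compute $K_s\cdot K_s$ analytically --- via the relative dualizing sheaf $\omega_{\mathcal{M}/T}$ restricted to fibers and the deformation invariance of intersection numbers supported on the compact exceptional sets --- together with a separate comparison between the fibers $M_s$ and the \emph{minimal} resolutions of the $V_s$, since the fiberwise resolutions in a very weak simultaneous resolution need not be minimal and each extra blow-up shifts $K^2$ by $-1$.
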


Throughout, we will use Laufer's theorem to construct a very weak simultaneous resolution of a deformation of the singularities in question. The deformation of varieties $f_s$ that we constructed above yields a flat deformation $V(f_s)$ of the two-dimensional singularity $V = V(f)$ and we calculate $K_s \cdot K_s$ by its value on the corresponding resolution $X_s$ of $V(f_s)$. 

The variety $V$ is a hypersurface and hence also Gorenstein. Each $X_s$ comes with a fibration by algebraic curves induced by the projection onto the $t$ coordinate. Truncating to a disk of large radius gives the boundary the structure of an open book $B$ on $Y = \bdry X_s$, which is independent of $s$, and each $X_s$ is pseudo convex with boundary a plane field isotopic to the contact structure $\xi$ carried by $B$. For each $X_s$, then we have 
\[K_s \cdot K_s = K(X_s)^2=c_1^2(X_s) = 4 d_3(\xi)+2\chi(X_s) + 3 \sigma(X_s) \label{eqn:gompf}\]
where $\chi(X_s)$ is the Euler characteristic and $\sigma(X_s)$ is the signature, and $d_3$ is Gompf's 3-dimensional invariant of the plane field $\xi$ \cite{Gompf}. As $d_3$ is determined by $\xi$, it is independent of the choice of deformation. Thus a deformation determines a splitting precisely when the Euler characteristic and signature of $X_{f_s}$ is independent of $s$.

This implies the following proposition:

\begin{proposition}[\cite{SV}] \label{prop:SV} Let $(V,p)$ be an isolated two-dimensional singularity whose link is strictly pseudoconvex and with resolution $X$ which is symplectically deformation equivalent to a Stein domain. Let $V_s$ be a flat deformation of $(V,p)$ with isolated singularities and strictly pseudoconvex boundary and having minimal resolutions $X_s$, also symplectically deformation equivalent to a Stein domain. Then the family $X_s$ forms a flat deformation of $X$ (possibly after finite base change) if and only if the $X_s$ share the same $b_1$, $b_2^+$ and $b_2^-$ as $X$. 
\label{prop:Laufer}
\end{proposition}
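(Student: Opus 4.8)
The plan is to run the argument through Laufer's theorem (Theorem~\ref{thm:laufer}) and the Gompf formula for $c_1^2$, reducing the existence of a flat simultaneous resolution to a single numerical equality and then unpacking that equality into the three Betti numbers. First I would observe that, by definition, the family $X_s$ forms a flat deformation of $X$ (after finite base change) precisely when the total family $\mathcal{V}\to T$ of the singularities $V_s$ admits a very weak simultaneous resolution whose fibers are the minimal resolutions $X_s$. Since $(V,p)$ is a hypersurface singularity it is normal and Gorenstein, and $V_s$ is by hypothesis a flat deformation over a reduced base with isolated singularities; hence Laufer's theorem applies and tells us that such a simultaneous resolution exists if and only if $K_s\cdot K_s$ is independent of $s$.

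Next I would rewrite $K_s^2$ using the Gompf formula $K_s^2 = 4d_3(\xi) + 2\chi(X_s) + 3\sigma(X_s)$ of \cite{Gompf}. The key input is that the boundary open book on $Y=\bdry X_s$, and therefore the filled contact structure $\xi$, is unchanged by the deformation: the deformation is supported in the interior of the fixed truncating ball, so the link is fixed. Consequently $d_3(\xi)$ is the same for every $s$, and constancy of $K_s^2$ is equivalent to constancy of $2\chi(X_s)+3\sigma(X_s)$. Because each $X_s$ is symplectically deformation equivalent to a Stein domain of real dimension four, it is connected with the homotopy type of a $2$-complex, so $b_0=1$ and $b_3=b_4=0$; substituting $\chi(X_s)=1-b_1+b_2^+ +b_2^-$ and $\sigma(X_s)=b_2^+-b_2^-$ gives
\[
2\chi(X_s)+3\sigma(X_s) = 2 - 2b_1(X_s) + 5b_2^+(X_s) - b_2^-(X_s).
\]
The ``if'' direction is now immediate: if the three Betti numbers agree with those of $X$ for all $s$, then $2\chi+3\sigma$ is constant, so $K_s^2$ is constant, and Laufer produces the flat deformation.

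For the converse, which is where the main work lies, I would need to upgrade the constancy of the single linear combination $2\chi+3\sigma$ to the constancy of each of $b_1,b_2^+,b_2^-$ separately. Two geometric facts pin down two of the three invariants independently of $s$. First, each $X_s$ is a \emph{minimal} resolution of the normal surface singularities of $V_s$, so its second homology is carried by the compact exceptional divisors, on which the intersection form is negative definite by Grauert--Mumford; hence $b_2^+(X_s)=0$ for all $s$, matching $b_2^+(X)=0$. Second, since all $X_s$ fill the same contact boundary $(Y,\xi)$, the long exact sequence of the pair $(X_s,Y)$ together with the fixed $H_1(Y)$ constrains $b_1(X_s)$ to the common value $b_1(X)$ (in the cases at hand the fillings are simply connected, so $b_1=0$). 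Granting these, the displayed identity collapses to $2\chi+3\sigma = 2 - 2b_1 - b_2^-$ with $b_1$ fixed, so its constancy is equivalent to constancy of $b_2^-(X_s)$; combining the three, flatness of the family is equivalent to the equality of all of $b_1,b_2^+,b_2^-$ with those of $X$.

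The hard part, then, is not the Laufer--Gompf machinery but these two structural inputs: verifying that the deformed resolutions remain genuinely negative definite (so that no positive second homology appears when the central singularity splits) and that $b_1$ is rigid under the deformation. Both ultimately rest on the hypothesis that every $X_s$ is symplectically deformation equivalent to a Stein filling of the fixed $(Y,\xi)$, which is also exactly what legitimizes invoking the Gompf formula for $c_1^2$. A secondary technical point to dispatch is the finite base change in Laufer's theorem, which reparametrizes $T$ but changes neither the diffeomorphism type of the individual fibers $X_s$ nor their Betti numbers.
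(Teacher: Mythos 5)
Your ``if'' direction (equal $b_1$, $b_2^+$, $b_2^-$ implies flatness) is exactly the paper's argument: the boundary open book, hence $d_3(\xi)$, is fixed along the deformation, so constancy of the three Betti numbers forces constancy of $2\chi + 3\sigma$ and hence of $K_s^2$, and Laufer's Theorem~\ref{thm:laufer} then produces the very weak simultaneous resolution. (One caveat: you justify applying Laufer by asserting that $(V,p)$ is a hypersurface, but that hypothesis appears nowhere in the statement; the normal Gorenstein condition has to be taken as part of the setting, as the paper does when it invokes the proposition for its hypersurface examples.)

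The converse is where there is a genuine gap. You argue: flat $\Rightarrow$ (Laufer) $K_s^2$ constant $\Rightarrow$ $2\chi+3\sigma$ constant, and then try to upgrade this single numerical identity to constancy of all three Betti numbers via two structural claims, both of which fail. The first claim --- that $b_2^+(X_s)=0$ automatically because $H_2(X_s)$ is carried by the compact exceptional divisors of the minimal resolution, negative definite by Grauert--Mumford --- is false in this setting: $X_s$ is the resolution of the \emph{deformed} variety $V_s$, not a tubular neighborhood of its exceptional set, so it also carries the Milnor-fiber topology of the deformation, i.e., the vanishing cycles. In the paper's own example of Section~\ref{sec:psi2}, $V_s$ is smooth, so $X_s=V_s$ has no exceptional curves at all, yet $b_2(X_s)=5$; negative definiteness there is not automatic but is precisely what must be proved by hand (the capping/embedding trick of Section~\ref{sec:psi2} and Proposition~\ref{prop:negdef}). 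If $b_2^+(X_s)=0$ were automatic, the Betti-number condition in the proposition would be largely vacuous, and the ``wrong'' deformations the paper warns about --- those taking $X$ to a smoothing with different topology --- could not exist. The second claim --- that $b_1(X_s)=b_1(X)$ is forced because all $X_s$ fill the same contact boundary $(Y,\xi)$ --- is likewise unjustified: the long exact sequence of the pair $(X_s, Y)$ constrains but does not determine $b_1$ of a filling, and fillings of a fixed contact 3-manifold need not share $b_1$. The irony is that the converse needs none of this machinery: a flat family of resolutions with smooth fibers over a reduced base is, over a sufficiently small disk, differentiably trivial, so all $X_s$ are diffeomorphic to $X$ and in particular share every Betti number. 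Running Laufer backwards, as you do, only yields the weaker statement that $2\chi+3\sigma$ is constant, and that cannot be disentangled into the three separate equalities without the false inputs above.
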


In all of our examples, the intersection form of $X$ will be negative semi-definite and $b_1(X)=0$. A deformation of the resolution of the singularity will be flat if the intersection form of $X_S$ is also negative definite, $b_1(X_s) = 0$ and $b_2(X_s) = b_2(X)$. 

As discussed, only certain restricted choices of deformations $\Delta_s$ (or $X_s$) guarantee that this condition holds and we have to ensure that our deformations conform to these constraints. The values of $b_1$, $b_2$ and the negative definite property will be checked in each case.

{\bf{Mapping class groups and branched covers}} 

Let $S$ be a surface with boundary and let $S^*$ denote $S$ with $n$ marked points, $p_1, \dots, p_n$. Let $\MCG(S)$ denote the mapping class group of $S$, group of isotopy classes of orientation preserving diffeomorphisms which are the identity on $\partial S$. Let $\MCG(S^*)$ denote the mapping class group of $S^*$, the group of isotopy classes\footnote{Isotopies should also fix the boundary and preserve the set of marked points.} of diffeomorphisms of $S$ which fix $\partial S$ pointwise and which preserve the set of marked points $\{p_1, \dots, p_n\}$. The group $\MCG(S)$ is generated by Dehn twists \cite{Lickorish} and the group $\MCG(S^*)$ is generated by Dehn twists and braid half-twists \cite{Lickorish, FarbMargalit}. We will sometimes refer to $\MCG(S^*)$ as the \emph{generalized braid group} of S.

If $S_1 \mapsto S_2$ is an $m$-fold branched cover with ramified points $p_1, \dots, p_n$, then a diffeomorphism $\phi_2\in \MCG(S_2^*)$ lifts to a diffeomorphism of $S_1$ if and only if it commutes with the deck action $\pi_1(S_2 -\{p_1, \dots, p_n\} \mapsto \Sym(m)$ \cite{BirmanHilden}. This is independent of $\phi_2$ up to isotopy and closed under composition. Birman and Hilden called the corresponding subgroup of the mapping class group the \emph{liftable} subgroup and additionally characterized which powers of Dehn twists and braid half-twits lift to diffeomorphisms of $S_1$ (as well as what they lift to). As the fundamental example, a braid half-twist between $p_i$ and $p_j$ along an arc $a$ lifts if and only if the preimage of $a$ consists of a single connected circle which double covers $a$ and some number of arcs which cover $a$ with degree 1. In this case, braid half-twist lifts to a Dehn twist along the circle component. In this paper, we will deal only with double branched covers, so the lifting condition is guaranteed for all mapping class elements. 

\begin{remark} Our particular interest is in \emph{positive factorizations} in $\MCG(S)$, ways of factoring mapping class elements into products of right-handed Dehn twists $\phi = \t_1 \cdots \t_k$. In this paper, we use \emph{right-action} notation for the action of the mapping class group on the surface, so in the product $\t_1 \cdots \t_k$, $\t_1$ acts first followed by $\t_2$ and so on. Conjugating a Dehn twist by any diffeomorphism again yields a Dehn twist: $\phi^{-1} \t \phi = \phi(\t)$. For the latter notation, if $\t$ is the Dehn twist about the curve $\gamma$ then $\phi(\t)$ is the Dehn twist about its image $\phi(\gamma)$. \end{remark}

A \emph{Hurwitz move} on a positive factorization is a rewriting of the factorization using one of the two following conjugations:

\begin{itemize}
    \item $\cdots \t_i \t_{i+1} \cdots \rightarrow \cdots \t_{i+1} \cdot ( \t_{i+1}^{-1} \t_{i} \t_{i+1}) \cdots = \cdots \t_{i+1} \cdot \t_{i+1}(\t_i) \cdots,$ or
    \item $\cdots \t_i \t_{i+1} \cdots \rightarrow \cdots (\t_{i} \t_{i+1} \t_i^{-1}) \cdot \t_i \cdots = \cdots \t_i^{-1}(\t_{i+1+}) \cdot \t_i \cdots$
\end{itemize}

We will sometimes refer to this as \emph{sliding} one twist past another. We can similarly conjugate braid half-twists with each other, as well as half-twists with Dehn twists and vice versa. (See \cite{BirmanHilden} or Chapter 9 of \cite{FarbMargalit}, for example.) They all have the same relationship: $\t_1^{-1} \t_2 \t_1 = \t_1(\t_2)$.

Our interest in positive factorizations lies in their connection to \emph{Lefschetz fibrations} \cite{Donaldson, Gompf, LoiP}. A Lefschetz fibration is a surjection $\pi : X \rightarrow B$ where $X$ has dimension 4 and $B$ has dimension 2 and the only allowed singularities have the local model of $\pi:(z_1, z_2) \rightarrow z_1^2 + z_2^2$ (in orientation-preserving complex coordinates). We often assume that all singularities lie in distinct fibers of $\pi$ (which can be realized by a small perturbation). Throughout, $X$ will be either non-compact or with boundary and we assume that the critical points of $\pi$ lie in the interior of $X$. The smooth fibers of this fibration are oriented surfaces and each singular fiber is a nodal singularity obtained by collapsing a simple closed curve (the \emph{vanishing cycle}) in the nearby smooth fibers. Away from the singular points, the Lefschetz fibration admits a horizontal connection (which is orthogonal to the fibers in the neighborhood of the singularity) and the vanishing cycle in a nearby fiber consists of all points which limit to the singularity under the horizontal flow. The \emph{monodromy} around a singular fiber is given by a positive (right-handed) Dehn twist along the vanishing cycle and we identify the vanishing cycles of different fibers by choosing a reference fiber (assumed to be smooth) $\pi^{-1}(x_0) = \Sigma_0 = \Sigma_{g,k}$, a surface of genus $g$ and having $k$ boundary components (or ends if the surface is non compact), lying over the reference point $x_0 \in B$. Then for each singular fiber, choose a path from the reference fiber to the singular fiber and extend the vanishing cycle along the path to get a curve in $\pi^{-1}(x_0)$. (This depends on the choice of horizontal connection for the bundle, but only up to smooth isotopy.) A choice of pairwise disjoint, smooth paths gives a cyclic ordering of the vanishing cycles by how they emanate from the reference point. Choosing a disk $D$ in $D^2$ containing all singular values of $\pi$ and with $x_0 \in \partial D$, the preimage of $\partial D$ is a surface bundle over $S^1$. Using the reference fiber, we can identify the monodromy $\phi$ of this surface bundle as an element of $\MCG(\Sigma_0)$. The product of the positive Dehn twists for all vanishing cycles in the cyclic order induced by the choice of paths is a positive factorization of $\phi$ in $MCG(\Sigma_0)$, $\phi = \t_1 \cdots \t_k$. Indeed, any Lefschetz fibration over $D^2$ is determined up to diffeomorphism by its corresponding positive \emph{monodromy factorization}. (See \cite{GS}.)

A \emph{braided surface} in a Lefschetz fibration $\pi: X \rightarrow \CC$ is a properly embedded surface $S \in X$ which is disjoint from the singular points of $\pi$ and which intersects each fiber transversely save for a finite number of tangencies and all transverse intersections are positive. At places of tangency we require that $S$ has a standard form with respect to the fibration. Locally choosing complex coordinates $(x,y)$ on $X$ so that $\pi$ is given by the projection to the first factor, we allow tangencies of the form $x^p = y^q$. If every point of tangency of $S$ occurs at a different fiber and with a local model of $x = y^2$ we say that $S$ is \emph{simply braided}. (See \cite{AurouxKatzarkov}.) We call the image under $\pi$ of all the places of tangency the \emph{braid values} of the surface $S$. The \emph{index} of $S$ is the intersection number of $S$ with a regular fiber. If we take a small enough disk $D$ in $D^2$ containing the braid value of a simple tangency, it is disjoint from the set of singular values of $\pi$ and so the fibration is trivial. If we look at how $S$ intersects each fiber, over the boundary of $D$ it intersects each fiber in $\ind{S}$ points and the movie of those intersections as we move around $\partial D$ is a right-handed braid halftwist $\beta$ between two of those points along some arc $a$. The collection and relative placement of these halftwists determines $S$ (up to isotopy). More specifically, to any braided surface we can associate a quasipositive braid half-twist factorization of its boundary braid. We do this by choosing a reference fiber (typically $x=1$) and for each braid value a choice of pairwise disjoint simple paths in $\CC$ from the reference fiber to the braid value. We additionally choose one path each (pairwise disjoint from each other and also from the braid paths) from the reference fiber to the singular values of $\pi$. We can achieve a Hurwitz move of the factorization by replacing the choice of path $a_i$ in the factorization with the arc that starts to the left of $a_{i+1}$ (or to the right of $a_{i-1}$ in the second case) and which stays in a neighborhood of their union. (See figure Hurwitz.) Because of this equivalence, we typically are interested in (quasi-)positive factorizations only up to Hurwitz equivalence. 

Suppose we are given a simply braided surface $S$ in $X$ and let $\Sigma_0$ be the reference fiber of $\pi$ and $\{q_1, \dots, q_d\}$ the intersection of $S$ with $\Sigma_0$. Let $\Sigma'_0$ be a double cover of $\Sigma_0$ branched over $\{q_1, \dots, q_d\}$. If every vanishing cycle of $\pi$ lifts to $\Sigma'_0$, then there is a double cover $\rho:X' \rightarrow X$, branched over $S$, in which $\Sigma'_0$ is the preimage of $\Sigma_0$ and $\rho|_{\Sigma'_0}$ is our chosen branched double cover. In this case, the composition $\pi' = \pi\circ \rho$ is a Lefschetz fibration on $X'$. The set of singular values of $\pi'$ is the union of the singular values of $X$ and the braid values of $S$. A choice of a set of arcs of $S$ and $\pi$ gives a positive factorization of $S \subset X$ and it also gives a set of arcs for the singular values of $\pi'$. From the picture, we can read off the positive factorization associated to $\pi'$ by these arcs.
\begin{itemize}
\item Each vanishing cycle of $\pi$ lifts to two vanishing cycles in $\pi'$, the preimages.
\item Each braid singularity lifts to a Lefschetz singularity. Its corresponding vanishing cycle is the simple closed curve that is the preimage of the braid half-twist arc associated to that braid value of $S$.
\end{itemize}

(see, for instance, \cite{AurouxKatzarkov, IT}). 

Additionally, we see that Hurwitz moves on $X$ yield (sometimes compound) Hurwitz moves on $X'$ which follow the same lifting pattern. The principal application of this here to determine the positive factorization of a Lefschetz fibration when we can write it as a suitably nice double branched cover, as well as to demonstrate certain simplifications by Hurwitz moves.

\section{Resolutions and Namikawa-Ueno fibers of types VII, VI and VII* from different polynomials} 
\label{sec:resolutions}

In \cite{SV} we have constructed 13 types of the Namikawa-Ueno fibers in a different way than in \cite{NamikawaUeno-long}, but from the same polynomials as given in \cite{NamikawaUeno-long} and \cite{NamikawaUeno-list}. These are resolutions of suspension type singularities and we determined the resolutions by applying Nemethi's algorithm in \cite{Nemethi-algorithm}. However, we would like to remark that Namikawa-Ueno's defining polynomials for the type VII, VI and VII* fibers are not of suspension type and Nemethi's algorithm does not apply to resolve them. We are able to construct type VII, VI and VII* fibers from different polynomials as follows.  

Let us begin with type VII. It is given by the polynomial $y^2=x(x^4+t)$ in \cite{NamikawaUeno-list}. The resolution of $t=x(x^3-y^2)$ also yields a type VII singular fiber: Following Nemethi's algorithm, first we resolve the plane curve singularity $x(x^3-y^2)=0$. That gives us the first configuration $\mathcal{S}$ in Figure \ref{VIIalt}. We note that this is the same configuration as in \cite{NamikawaUeno-long}, the first figure on p.342. Then, we apply Nemethi's algorithm with N=1; i.e., we take the degree one cover of $\mathcal{S}$. Next, we successively blow-down the $-1$ spheres as shown in Figure \ref{VIIalt}. We obtain type VII fiber at the end, after capping-off the arrows. (Compare this to \cite{NamikawaUeno-long}, pp.342-343.) We also note the typo in \cite{NamikawaUeno-list}, in the configuration of type VII fiber. The cuspidal curve is a rational curve as they construct and write in \cite{NamikawaUeno-long}, it is not a genus one curve.

\begin{figure}[htb]
{\includegraphics[width=15cm]{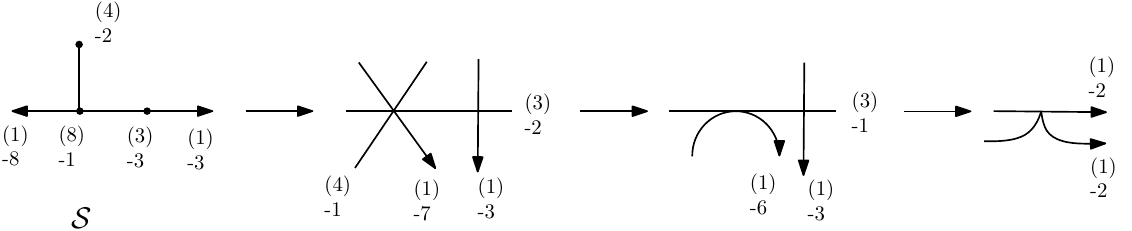}}
\caption{Construction of fiber VII from a different polynomial: $\psi_1$.}
\label{VIIalt}
\end{figure}

Type VI fiber is given by the polynomial $y^2=x(x^4+t^2)$ in \cite{NamikawaUeno-list}. We reconstruct it from $t^2=x(x^3-y^2)$. We apply Nemethi's algorithm to the configuration $\mathcal{S}$ above, with $N=2$. As a result, we obtain a minimal graph as we show in Figure \ref{VIalt} which is the fiber of type VI.

\begin{figure}[htb]
{\includegraphics[width=9cm]{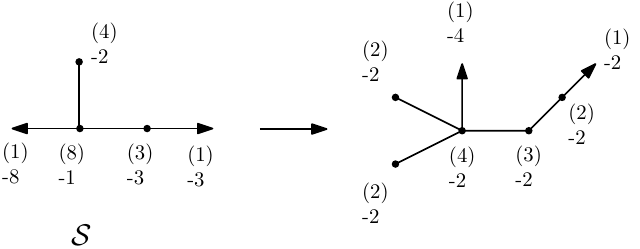}}
\caption{Construction of fiber VI from a different polynomial: $\psi_1^2$.}
\label{VIalt}
\end{figure}

Type VII* is given by the polynomial $y^2=x(x^4+t^5)$ in \cite{NamikawaUeno-list}. We reconstruct it from $t^5=x(x^3-y^2)$. We apply Nemethi's algorithm again to the configuration $\mathcal{S}$, with $N=5$. As a result, we obtain the first graph in Figure \ref{VII*alt}. Next, we blow-down the $-1$ spheres, this gives us type VII* fiber as depicted in Figure \ref{VII*alt}.

\begin{figure}[htb]
{\includegraphics[width=15cm]{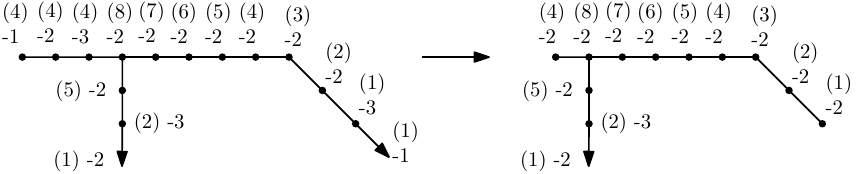}}
\caption{Construction of fiber VII* from a different polynomial: $\psi_1^5$.}
\label{VII*alt}
\end{figure}

We also resolve the remaining polynomials $t^k=x(x^3-y^2)$, for $k=3,4,6,7,8$ in the same way as we discussed above. We summarize the results in Figure \ref{phi2differentpoly}. The second column shows the graphs when we apply Nemethi's algorithm to the configuration $\mathcal{S}$ for the given values of $k$, and the third column shows the resulting graphs when we blow-down the $-1$ curves.

\begin{figure}[htb]
{\includegraphics[width=15cm]{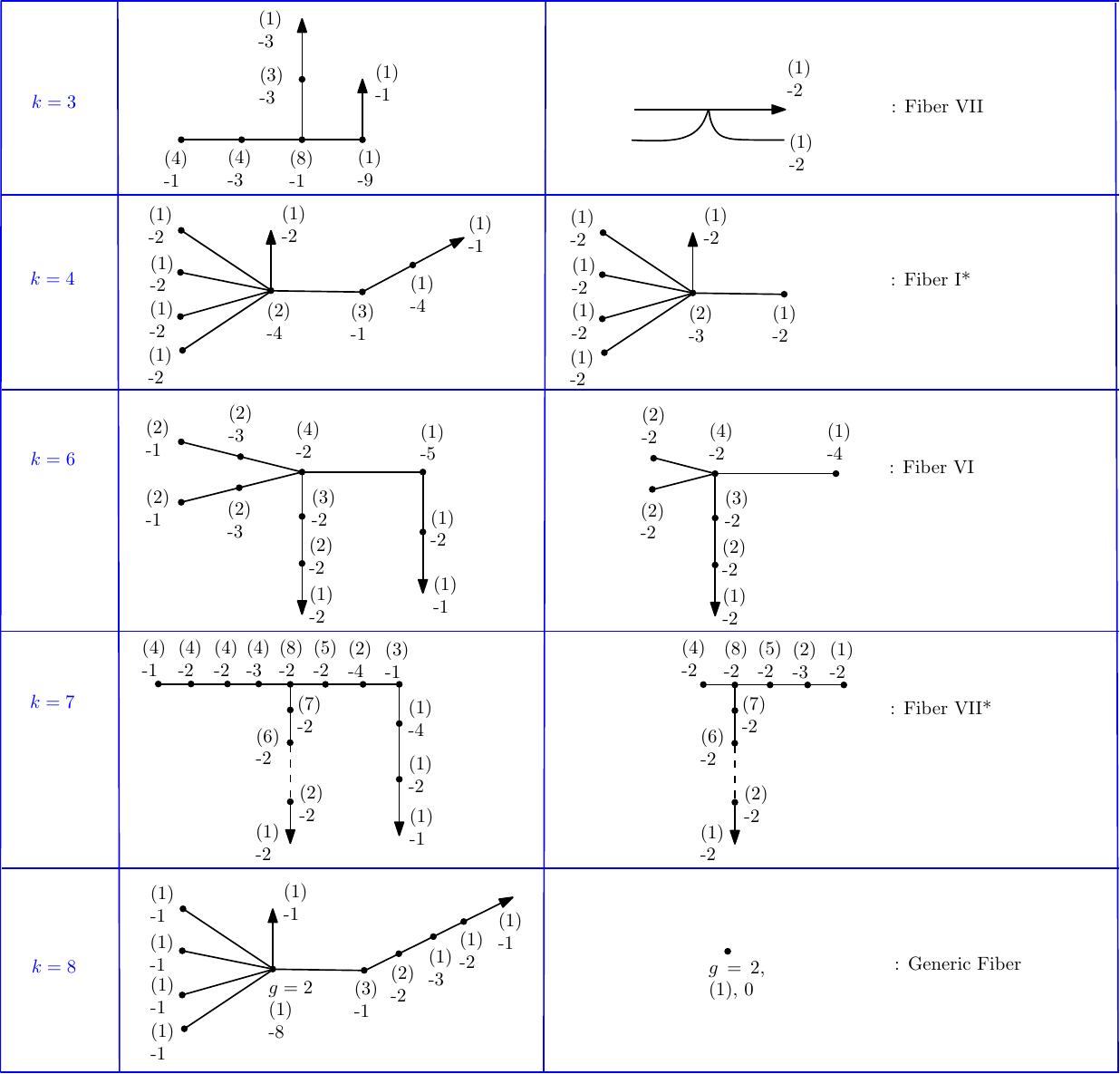}}
\caption{Constructions of VII, I*, VI, VII* from different polynomials.}
\label{phi2differentpoly}
\end{figure}

Next, we would like to remark that in the resolution graphs of $t^k=x(x^3-y^2)$ for $k=3, \cdots, 8$, there are $-1$ arrows (cf. the graphs in the first column of Figure \ref{phi2differentpoly} and the first graph in Figure \ref{VII*alt}. (Also, note that Figure \ref{VII*alt} is the $k=5$ case)). When we do not cap these off and thus do not blow them down, we obtain the graphs as shown in Figure \ref{table2}. In computing the monodromy factorizations for each $k$ in the following section, we will use these graphs in Figure \ref{table2} (for the $k=3, \cdots, 8$ cases) and the ones in Figures \ref{VIIalt} and \ref{VIalt} (for $k=1$ and $k=2$, respectively).

\begin{figure}[htb]
    \centering
{\includegraphics[width=15cm]{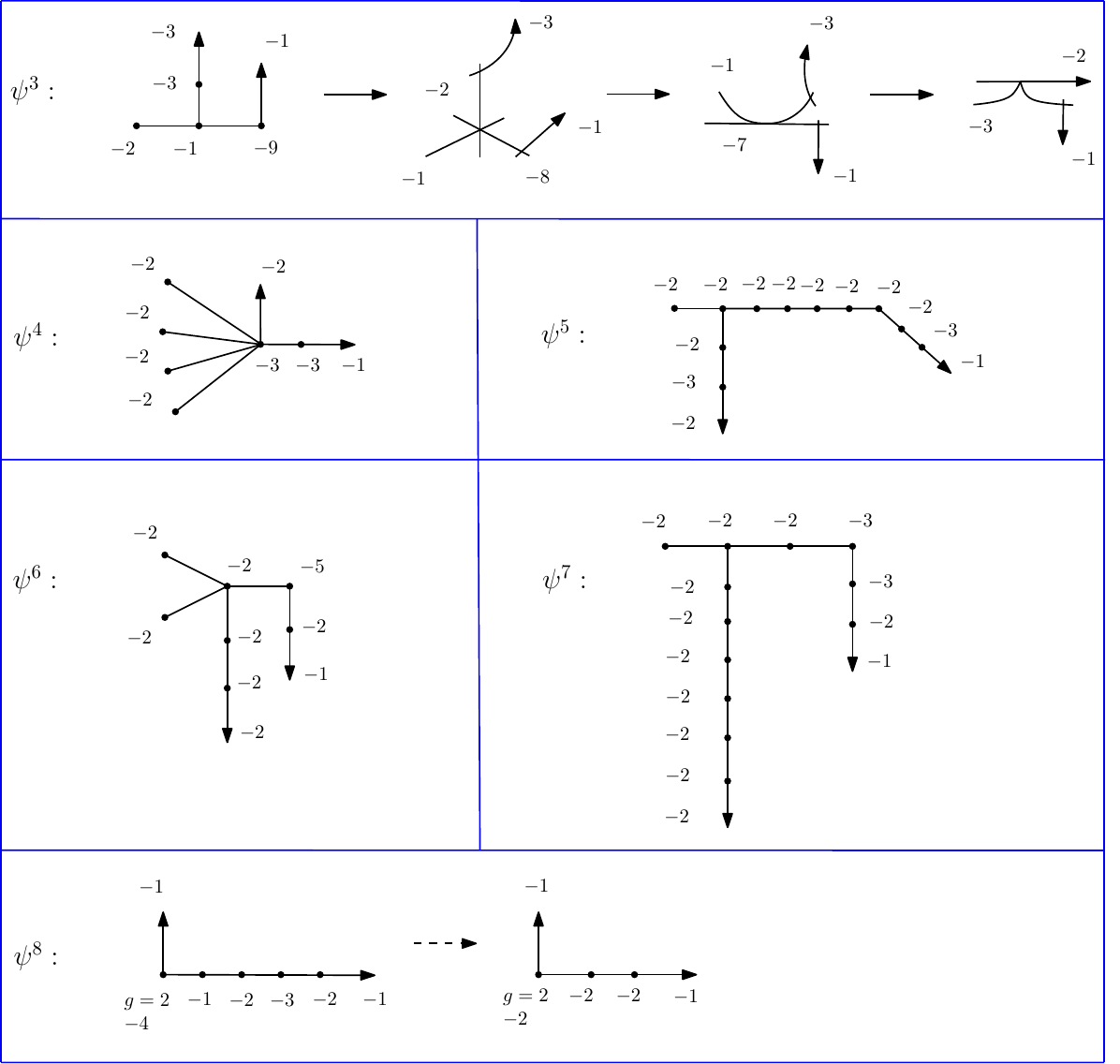}}
\caption{Resolution graphs of $t^k=x(x^3-y^2)$ for $k=3, \cdots, 8$ when we do not cap off the arrows.}
\label{table2}
\end{figure} 

\subsection*{Resolution of \texorpdfstring{{\boldmath $t^3 = x(x^2-y^2)$}}{sec:g=1}}
\label{sec:g=1}

Now we resolve $t^3 = x(x^2-y^2)$ and we will use it in Section \ref{sec:psi0}. First we resolve the plane curve singularity $x(x^2-y^2)=0$ by blow-ups. This gives one vertex $w$ of multiplicity 3, self intersection $-1$, and 3 arrows emanate from $w$. Next, to resolve $t^3 = x(x^2-y^2)$ we apply Nemethi's algorithm with $N=3$. We find that above $w$ there is 1 vertex $w'$ of multiplicity 1 and genus 1. Above each arrow emanating from $w$, there is one string of type $G(3,1,3)$ (see \cite{Nemethi-lectures}, Appendix 1 for the string notation). After computing we see that these strings are arrows, they start at $w'$, and have no extra vertices. Then we compute that the central vertex $w'$ has self intersection $-3$. This completes the resolution.

\section{Deformations and monodromy factorizations}
\label{psik}

\subsection{Introduction and base case}
\label{sec:basecase}

Our goal is to understand the singular fibration on the vanishing locus $V(f)$ of the polynomial $\psi_1: t = x(x^3-y^2)$ in $\CC^3$. $V(f)$ is smooth and so biholomorphic its resolution $X$ which is in this case $\CC^2$. As a subvariety of $\CC^3$, $X$ is the graph of the function $f(x,y) = x(x^3-y^2)$ and the fibration is given by intersecting with the planes $t=\mathit{const}$, giving us a map $\pi_t: X \rightarrow \CC_t$ induced by projection on $\CC^3$. For each value of $t$, then, the fiber in $X$ is the curve in $\CC^2$ given by $f(x,y) = t$, and indeed the entire fibration is the Milnor fibration on $\CC^2$ given by the polynomial $f(x,y)$. The singular fiber over $t=0$ is complicated and its compact components are shown in the resolution in Figure~\ref{VIIalt}. Every algebraic fibration is close to a Lefschetz fibration and our goal is to realize such a deformation of the fibration and determine the resulting monodromy factorization. 

The fibers $\mathcal{C}_t$ of $\pi_t$ are genus 2 curves with two boundary component. To see this, we look at the link $\mathcal{L}$ of a fiber $x(x^3-y^2) = \epsilon$ in $S^3.$ This is a two component link, consisting of an unknot and the trefoil, where the trefoil is braided about the unknot as the (2,3) torus knot about the braid axis. (See Figure~\ref{fig:braid1}.) This is an oriented, fibered, strongly quasipositive $\CC$-link (in the terminology of Rudolph), with self-linking number 4 (see Rudolph), and so every complex curve with boundary $\mathcal{L}$ has genus 2 \cite{Rudolph}.

\begin{figure}
    \centering
    \includegraphics[width=5.5in]{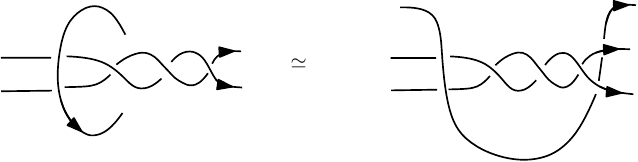}
    \caption{A braid description for the link of $x(x^3-y^2) = \epsilon$ as a subset of $S^3$.}
    \label{fig:braid1}
\end{figure} 

The entire Milnor fibration on $\CC^2$ is invariant under the involution $(x,y) \mapsto (x,-y)$. The quotient space of $\CC^2$ under the involution is also $\CC^2 =<x,z>$ where $z=y^2$ and the complex curves $\mathcal{C}_t$ quotient to the curves $\mathcal{A}_t$ defined by $t = x(x^3 -z)$. The latter curves are all annuli and for $t\neq0$ can be parametrized as $z = \frac t x -x^3$ for $x \in \CC^\times$. The involution fixes the hyperplane $y=0$ which maps to the plane $z=0$ in the quotient. Each of the curves $\mathcal{C}_t$ for $t\neq 0$ is transverse to the fixed point set and so the involution gives a branched double cover $\mathcal{C}_t \mapsto \mathcal{A}_t.$ The branch points in $\mathcal{A}_t$ correspond to the intersections with $z=0$, which correspond to the four solutions to $x^4 = t$. This cover is shown in Figure~\ref{fig:branched cover} for $t=1$.

\begin{figure}
    \centering
    \includegraphics[width = 6in]{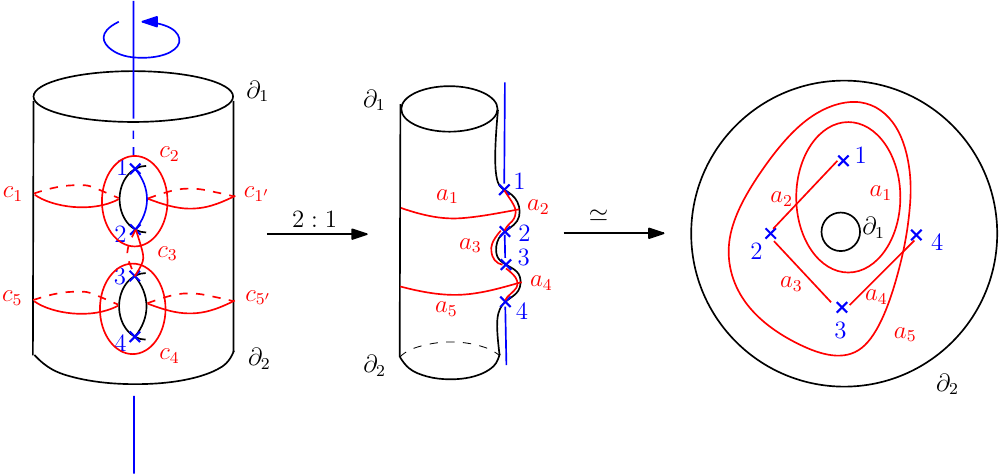}
    \caption{The branched cover of $\mathcal{C}_1$ over $\A_1$ and identification with $D^2 -\{0\}\subset \CC^\times$. We also identify the arc and circle collection in $\A_1$ which we will use and their corresponding lifts to $\mathcal{C}_1$. The Dehn twists that we will use in all our factorizations are $\tau_i$ for $i = 1, 1', 2, 3, 4, 5, 5', \partial_1, \partial_2$ about the curves $c_i$ (or $\partial_j$) shown in the figure.}
    \label{fig:branched cover}
\end{figure}

We can read off the monodromy of the fibration around a small disk containing $t=0$ by looking at the monodromy of the branch locus in the quotient. We do this by following the solutions $x^4 = t$ in $\CC^\times$ as $t$ traverses the circle: $t = \epsilon e^{2\pi i \theta}$ for $\theta \in [0,2\pi]$ and as there are no singularities away from $t=0$, we can choose $\epsilon =1$. At $\theta = 0$, the four solutions are $x = \pm1,\pm i $. As $\theta$ increases to $2\pi$, the four branch points make a counterclockwise rotation through $\pi/2$ radians. Lifting this monodromy to the double branched cover gives the monodromy of $f(x,y)$. With a bit of work (see \cite{Keiko}), we could construct a Dehn twist factorization of this monodromy starting with a factorization of the braided branch locus into liftable twists, but instead we achieve a stronger result by finding the positive factorization associated to a Lefschetz fibration close to the Milnor fibration.

The Milnor fibration is not a Leftschetz fibration but we would like to deform it into one. We can achieve this by the perturbation $t = f_s(x,y) = x(x^3 -s - y^2)$. This is again a fibration on a nearby copy of $\CC^2$ in $\CC^3$. As before, we can think of the fibration being given either by intersecting with the hyperplanes $t = const$ or via the orthogonal projection to the $t$ coordinate. For $s$ small, non-zero, there are four Lefschetz singularities: three at the values of $-3(s/4)^{4/3}$ and one over 0. Both the surface and the fibration are equivariant with respect to the involution $y\mapsto -y$. The quotient is the fibration $t = x(x^3 -s - z)$ on $\CC^2 \subset \CC^3$ and the branch locus corresponds to $y=0$ upstairs and $z=0$ downstairs. We reuse the above notation and again call the fibers of $f_s$ $\mathcal{C}_t$ and the fibers in the quotient $\A_t$. The branch curve $z=0$ we denote by $\Delta_s$. Indeed, using our understanding of braided surfaces in $\CC^2$, we can read off the monodromy by analyzing the behaviors of the branch points $\Delta_s \cap \mathcal{A}_t$. The three Lefschetz singularities over the values of $-3(s/4)^{4/3}$ correspond to simple branching of branch surface $\Delta_s$, thought of as a braided surface in the annulus fibration $f_s$ on $\CC^2$. This fibration has a nodal singularity at $t=0$ that lifts to two nodal singularities in the double cover. To understand the braiding, we parameterize the fibers for $t\neq0$ by 
$x \in \CC^\times$ using the map $z = x^3 - s -t/x$. The branch points $\Delta_s \cap \mathcal{A}_t$ correspond to the four solutions to $z=0$, specifically $0 = x^4 - sx - t$.

To determine the final factorization, we want to identify the vanishing cycles for each of the Lefschetz singularities as well as the vanishing cycle in $\Delta_s$ that produces the nodal singularity. We measure everything relative to the reference fiber over $t=1$. Over the fiber $\A_1$, the branch points are very near the fourth roots of unity. For $s$ a small positive real number, the three Lefschetz critical values in $\CC$ have angular coordinate $\pi/3, \pi, 5\pi/3$. Using the paths shown in Figure~\ref{fig:psi1} from $t=1$ to the Lefschetz critical values, we identify the three Lefschetz vanishing cycles as the double cover of three arcs indicating the braiding of the surface $\Delta_s$.

The vanishing cycle in $f_s$ contributing to the nodal singularity over $t=0$ can be determined by the path from $t=1$ to $t=0$. This pulls the branch point near $x = -1$ into the boundary at $x=0$, popping off a plane containing the corresponding branched point. This \emph{is} the Lefschetz singularity of $f_s$ and it contributes two Lefschetz singularities to the fibration in the double branched cover.

The entire description is summarized in Figure~\ref{fig:psi1}. However, to match with the skeleton used throughout, we should conjugate the factorization we achieved using the above perturbation by $\psi_1$ itself. We will see in Lemma~\ref{lem:hurwitz} that the two factorizations are Hurwitz equivalent and we write the monodromy factorization as $$\psi_1 = \tau_4 \tau_3 \tau_2 \tau_1 \tau_{1'}$$ where this labeling agrees with the labeling of the Dehn twist curves in Figure~\ref{fig:branched cover}.

\begin{figure}
    \centering
    \includegraphics[width=6in]{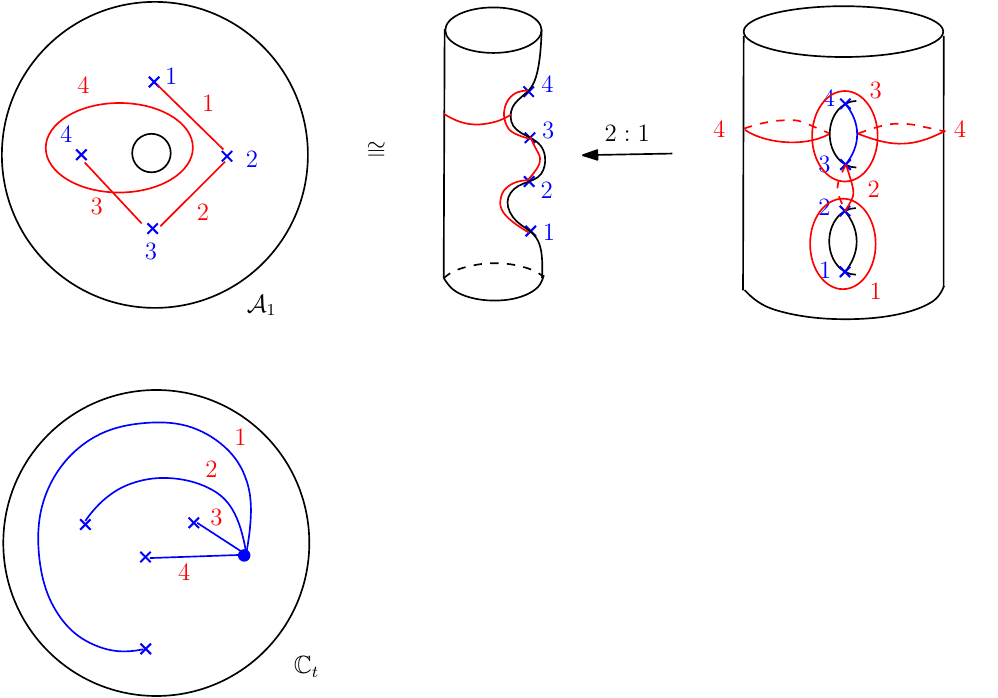}
    \caption{The paths in the $t$-plane, $\CC_t$, and corresponding branching and nodal singularity of $\Delta_s$ for $t = x(x^3-s-z)$ is shown on the left. In the middle, we identify this with the usual picture of the annulus as the quotient of the genus 2 curve. On the right, the factorization corresponding to the Lefschetz fibration on the double branched cover determined by $f_s$ on $\CC^2$. The labels $1, 2, 3, 4$ on the arcs and circles indicate the order in which the braid halftwists and Dehn twists occur in the factorization determined by the ordered choice of paths in $\CC_t$ shown in the bottom left. This skeleton on $\A_1$ doesn't agree with the description used in Figure~\ref{fig:branched cover}, but it turns out the corresponding word in the skeleton of Figure~\ref{fig:branched cover} agrees with this up to Hurwitz equivalence (see Lemma~\ref{lem:hurwitz}).}
    \label{fig:psi1}
\end{figure}

\subsection{\texorpdfstring{{\boldmath $t^2 = x(x^3-y^2)$}}{psi2}}
\label{sec:psi2}

The hypersurface $V$ in $\CC^3$ described by the polynomial $t^2 = x(x^3-y^2)$ is singular at the origin and we calculate the topology of its resolution $X$ in Section~\ref{sec:resolutions}, Figure \ref{VIalt}. The singular hypersurface again comes equipped with a genus 2 fibration (the fiber over the origin is singular) by intersecting with planes $t=\mathit{const}$ and this fibration extends to the resolution. We can deform the hypersurface $V$ into $\tilde{V}$ defined by the polynomial $t(t-s) = x(x^3-y^2)$, splitting the singularity into two singularities of type $\psi_1$, one at the origin and one at $t=s$. This is a smooth hypersurface of $\CC^3$ and using further deformations of the type used in the previous case of $\psi_1$, the corresponding fibration deforms into a Lefschetz fibration with word $$\psi_1^2 = \left(\tau_4 \tau_3 \tau_2 \tau_1 \tau_{1'}\right)^2.$$

However, we would like to know the stronger theorem, that $X$ and $\tilde{V}$ are deformation equivalent. To that end, we invoke Laufer's Theorem \ref{thm:laufer} to construct a very weak simultaneous resolution which produces a flat deformation of the resolution of $V$ into the resolution of $\tilde{V}$, calculating $K_s \cdot K_s$ by its value on the corresponding resolution $X_s$ of $V(f_s)$. From Proposition~\ref{prop:SV}, this amounts to ensuring that all $X_s$ are negative definite and with the same $b_1$ and $b_2$.

From the resolution we calculate in Section \ref{sec:resolutions}, Figure \ref{VIalt}, one can check that $b_1(X) = 0$, $b_2^+(X) = 0$ and $b_2^-(X) = 5$. The deformed surfaces $V_s$ are smooth and hence equal to their resolutions, and we can calculate $b_1(X_s)$, $b_2^+(X_s)$ and $b_2^-(X_s)$ from the Dehn twist factorization given above using the handle decomposition of $V_s$ described by the Lefschetz fibration \cite{GS}. The 4-manifold $V_s$ is built from $\Sigma_{2,2}$ by attaching 2-handles along each of the Dehn twist curves (with framing $-1$ relative to the fiber). First, we can see that the twists $\tau_2$, $\tau_3$, $\tau_4$, $\tau_{5}$ and $\tau_{5'}$ generate the first homology of the fiber $\Sigma_{2,2}$, so that $b_1(V_s)=0$. Second, $b_2(V_s) = 5$, as each of the remaining Dehn twists adds to the second homology. Third, the boundary 3 manifold is a rational homology sphere Seifert fibered space, so $b_2^0 = 0$, and thus we just need to show that the intersection form of $V_s$ is negative definite. One way to do this is to embed it into a different negative definite 4-manifold. If we attach a ``cap" to the fibration along $\bdry_2$ (topologically, this adds a 2-handle), we get the Lefschetz fibration for the singularity $\phi_2^2$ in \cite{SV} corresponding to the polynomial $y^2 = x(x^4 + t^2)$. There, we showed that the Lefschetz fibration corresponding to the monodromy $(\tau_1 \tau_1 \tau_2 \tau_3 \tau_4)^2$ is negative definite (with $b_2 = b_2^- = 6$). This is the monodromy (up to relabeling) of the capped factorization of $\psi_1^2$ and thus the intersection form for $V_s$ embeds as a sublattice of a negative definite lattice and so must also be negative definite. Thus $K_s^2$ is constant and equal to $K(X)^2$ and by Laufer has a simultaneous resolution, so there is some deformation value $s$ for which the Lefschetz fibration with word $\psi_1^2 = \left(\tau_4 \tau_3 \tau_2 \tau_1 \tau_{1'}\right)^2$ is a symplectic deformation of the fibration on the resolution $X$.

\subsection*{Smaller genus example: \texorpdfstring{{\boldmath $t^3 = x(x^2-y^2)$}}{psi0}} \label{sec:psi0}
The goal for this section is to study a smaller genus singularity that shows up in the splitting of the other singularities in the family $t^k = x(x^3-y^2)$. This fibration is related to the genus 1 \emph{star relation} \cite{Gervais}.

\noindent {\bf Mapping class group factorizations.}

\noindent Consider the surface $\Sigma_{1,1}$ of genus 1 and having 1 boundary component. This is the double branched cover of the disk with three marked points. Every mapping class is hyperelliptic and so the mapping class groups are the same $\MCG(\Sigma_{1,1}) \cong B_3$, where $B_3$ is the three stranded braid group. The abelianizations of these groups is $\ZZ$, with the isomorphism from $B_3$ being the algebraic word length of the braid (using the standard generators). In $\MCG(\Sigma_{1,1})$ there are only two types of right handed Dehn twists up to isomorphism: along homologically essential curves or boundary parallel. Under the quotient map from $\MCG(\Sigma_{1,1})$ to $\Ab(\MCG(\Sigma_{1,1}))$, every homologically essential twist maps to 1 and the boundary twist maps to 12 (as implied by the chain relation).

\begin{corollary} The only factorization in $\MCG(\Sigma_{1,1})$ of the boundary twist into a single twist must be by the boundary twist. Moreover, in any positive factorization of length strictly less than twelve, all twists save one must be trivial (i.e., along nullhomotopic curves in $\Sigma_{1,1}$). 
\end{corollary}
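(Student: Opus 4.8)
The plan is to push everything through the abelianization homomorphism recorded just above the statement. I would start by fixing notation: write $\ell \co \MCG(\Sigma_{1,1}) \to \Ab(\MCG(\Sigma_{1,1})) \cong \ZZ$ for the abelianization, which under the isomorphism $\MCG(\Sigma_{1,1}) \cong B_3$ is the algebraic word length. The crucial input, already assembled in the preceding paragraph, is that a right-handed Dehn twist in $\Sigma_{1,1}$ comes in exactly three flavors — about a nullhomotopic curve, a homologically essential curve, or the boundary — and that these map under $\ell$ to $0$, $1$, and $12$ respectively (the last by the chain relation).

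Next I would apply $\ell$ to a putative positive factorization $\t_\bdry = \t_1 \cdots \t_k$. Since $\ell(\t_\bdry) = 12$ and each $\ell(\t_i) \in \{0,1,12\}$, this yields the numerical constraint
\[
12 = \sum_{i=1}^{k} \ell(\t_i), \qquad \ell(\t_i) \in \{0,1,12\}.
\]
For the first assertion ($k=1$) this immediately forces $\ell(\t_1) = 12$, so $\t_1$ is boundary-parallel; since the boundary-parallel isotopy class is unique in $\Sigma_{1,1}$, I conclude $\t_1 = \t_\bdry$. For the second assertion I would observe that if every $\ell(\t_i) \le 1$ then the sum is at most $k < 12$, contradicting the constraint; hence some $\ell(\t_i) = 12$, which is again the boundary twist, and all remaining terms are forced to have $\ell(\t_j) = 0$, i.e.\ are twists about nullhomotopic curves.

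I do not expect a genuine obstacle: this is a counting argument in $\ZZ$ once the possible values of $\ell$ on a single twist are pinned down. The only step that deserves care is verifying that the three listed values are exhaustive and that the value $12$ is achieved only by the boundary twist. This rests on the classification of essential simple closed curves in $\Sigma_{1,1}$ — in particular that every separating essential curve is boundary-parallel, which follows from an Euler-characteristic count showing that cutting along such a curve yields a once-punctured torus together with an annulus — and on the chain-relation computation of $\ell(\t_\bdry)$. Both ingredients are already recorded before the corollary, so the proof reduces to assembling them into the displayed equation and reading off the two cases.
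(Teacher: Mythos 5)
Your proof is correct and follows exactly the route the paper intends: the corollary is stated as an immediate consequence of the preceding paragraph's computation that, under the abelianization $\MCG(\Sigma_{1,1}) \cong B_3 \to \ZZ$, a right-handed twist takes the value $0$, $1$, or $12$ according to whether its curve is nullhomotopic, homologically essential, or boundary-parallel. Your counting argument in $\ZZ$, together with the observation that the value $12$ forces the (unique) boundary-parallel class, is precisely this intended argument spelled out.
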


This holds more generally:

\begin{lemma} The only factorization of the single boundary multitwist $\tau_{\partial_1} \cdots \tau_{\partial_n}$ in $\MCG(\Sigma_{1,n})$ into $n$ positive twists must be the $n$ different boundary twists. 
\end{lemma}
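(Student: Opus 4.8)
The plan is to combine the abelianization argument underlying the preceding corollary with the classification of simple closed curves in $\Sigma_{1,n}$. For each boundary component $\partial_j$, let $\Phi_j\colon \MCG(\Sigma_{1,n}) \to \MCG(\Sigma_{1,1})\cong B_3$ be the homomorphism obtained by capping every boundary component except $\partial_j$ with a disk; it sends $\tau_{\partial_j}$ to the boundary twist of $\Sigma_{1,1}$ and every other $\tau_{\partial_k}$ to the identity, so it carries the target multitwist to the boundary twist. Composing with the abelianization $a\colon B_3 \to \ZZ$ (essential twists $\mapsto 1$, boundary twist $\mapsto 12$) gives homomorphisms $a_j$ with $a_j(\tau_{\partial_1}\cdots\tau_{\partial_n}) = 12$. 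First I would record the value of $a_j$ on a single factor $\tau_c$ in terms of the topological type of $c$: it is $0$ if $c$ is nullhomotopic or separates $\partial_j$ from the genus, it is $1$ if $c$ is nonseparating, and it is $12$ if $c$ is boundary-parallel to $\partial_j$ or, more generally, separating with $\partial_j$ on its planar side.

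\textbf{The numerical dichotomy.} Applying positivity to the $n$ equations $\sum_i a_j(\tau_{c_i}) = 12$ (one per $j$), and writing $P$ for the number of nonseparating factors and $N_j$ for the number of separating factors with $\partial_j$ on the planar side, each equation reads $12 = 12 N_j + P$. Since $P\ge 0$ is independent of $j$, this forces either $P=0,\ N_j=1$ for all $j$, or $P=12,\ N_j=0$ for all $j$. For $n<12$ the second alternative fails on length grounds; more cleanly, the strong form of the corollary (in a positive factorization of the boundary twist of $\Sigma_{1,1}$ of length $<12$ all but one factor is trivial), applied through each $\Phi_j$, rules out nonseparating factors outright. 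In the surviving case the planar sides $B_i$ of the separating vanishing cycles partition $\{1,\dots,n\}$ and the remaining factors are trivial; the total count $n$ then forces that either all $B_i$ are singletons, giving the $n$ distinct boundary twists as claimed, or some $B_i$ has size $\ge 2$ at the cost of introducing trivial factors.

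\textbf{The obstacle and geometric finish.} The main obstacle is this last step: ruling out a separating vanishing cycle whose planar side contains two or more boundaries (and, symmetrically, the twelve-nonseparating configuration when $n\ge 12$). No homomorphism to an abelian group can detect this, because the genus-$1$ star relation already writes a boundary multitwist as a product of interior twists, so the two alternatives are indistinguishable in $\Ab(\MCG(\Sigma_{1,n}))$; the content is genuinely geometric. The cleanest way I would finish is to note that a composite separating part would express $\tau_{\partial_1}\cdots\tau_{\partial_n}$ as a product of fewer than $n$ nontrivial positive twists, and then exclude this directly: the boundary multitwist acts as the identity on every isotopy class of essential curve or arc disjoint from $\partial\Sigma_{1,n}$, whereas a twist about a non-boundary-parallel curve $c$ moves any curve meeting $c$ essentially, so no positive product containing such a twist can equal the boundary multitwist. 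Alternatively, capping the fibers to a torus converts the factorization into a genus-$1$ Lefschetz fibration over $S^2$ carrying $n$ disjoint sections of square $-1$, and counting these against $b_2^-$ of the relatively minimal model yields the same conclusion. Establishing this geometric nonexistence, rather than the homological bookkeeping, is where the real work lies; the base case $n=1$ is precisely the first statement of the preceding corollary.
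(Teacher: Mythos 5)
Your abelianization bookkeeping via the capping homomorphisms $\Phi_j$ is correct, and it cleanly isolates the two dangerous configurations: a separating vanishing cycle whose planar side contains two or more boundary components (padded out by trivial twists), and, when $n\geq 12$, twelve nonseparating twists. But the argument you offer for the crucial remaining case rests on a false claim. It is not true that ``no positive product containing a twist about a non-boundary-parallel curve can equal the boundary multitwist'': the lantern relation writes $\tau_{\partial_1}\tau_{\partial_2}\tau_{\partial_3}\tau_{\partial_4}$ on $\Sigma_{0,4}$ as a product of three positive twists about non-boundary-parallel curves, and the genus-1 star relation --- which appears in this very paper in the form $\psi_0^3 = \tau_\partial$ --- writes the boundary multitwist on $\Sigma_{1,3}$ as a positive product of twelve twists about nonseparating curves. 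The inference ``one factor moves a curve meeting it essentially, hence the product moves it'' is exactly what fails: later twists in a positive product can undo the action of earlier ones on any given curve, positivity notwithstanding. So your ``cleanest finish'' proves nothing, and the case you yourself flag as ``where the real work lies'' is left open.

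Your fallback (cap the fibers, obtain a genus-1 Lefschetz fibration over $S^2$ with $n$ disjoint $(-1)$-sphere sections, and count against $b_2^-$) is essentially the paper's device, but the paper uses it only to exclude nonseparating vanishing cycles: a single nonseparating twist forces the blown-up total space to be $E(1)\cong \CP\# 9\bCP$, which carries at most $9$ pairwise disjoint $(-1)$-spheres, while the $\ZZ/12$ abelianization of $\MCG(T^2)$ forces at least twelve nonseparating twists --- incompatible with a length-$n$ factorization. That count does not dispose of the composite separating case: there every vanishing cycle becomes nullhomotopic after capping, the total space is a torus bundle blown up $n$ times, with $b_2^+=1$ and $b_2^-=n+1$, and $n$ pairwise orthogonal $(-1)$-classes embed in that lattice with no contradiction. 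What the paper actually does for the separating case is an induction on the number of boundary components: cap one boundary component at a time, apply the $n=1$ corollary as the base case, and use the fractional Dehn twist coefficient to show that at most one factor can become parallel to the capped boundary, which (repeating over all boundary components) rules out any separating curve enclosing more than one boundary component. Some ingredient of this kind --- sensitive to the twisting near each individual boundary component rather than to homology, lattices, or the action on closed curves --- is what your proposal is missing.
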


\begin{proof} This follows directly from an inductive or iterative argument capping boundary components, but the statement is trivial if $n>9$. To see the latter fact, first note that any factorization of the $\tau_{\partial_1} \cdots \tau_{\partial_n}$ in $\MCG(\Sigma_{1,n})$ gives a Lefschetz pencil and the Lefschetz fibration obtained by blowing up the pencil locus is either $E(1)$ or $T^2\times S^2$. Such a fibration has $n$ disjoint sections, each a sphere of square $-1$. If any of the Dehn twists in the factorization are along non-separating curves, then the total space must be $E(1)\cong \CP\# 9\bCP$, which has at most 9 sections, and so $n\leq 9$. Additionally, when there is a Dehn twist along a non-separating curve, there must be at least 12 non-separating twists in the positive factorization, and both of these situations cannot happen at the same time. Thus the positive factorization uses only separating Dehn twists. 

We prove the following stronger statement with this additional hypothesis. 

\begin{claim} \label{lem:fact1n} The only factorization of the boundary multitwist in $\MCG(\Sigma_{1,n})$ into positive twists along separating curves must be the $n$ different boundary twists (plus trivial twists). \end{claim}

The corollary covers the base case of $n=1$ and we'll prove the other cases by induction. Assume the statement holds for $\MCG(\Sigma_{1,n})$ and consider any factorization of the boundary multitwist in $\Sigma_{1,n+1}$ into $\ell$ positive, separating twists. Capping one boundary component of $\Sigma_{1,n+1}$ yields a factorization of the boundary multitwist into separating, positive Dehn twists. By induction this must be the $n$ boundary twists plus some trivial twists along nullhomotopic curves. Every \emph{embedded} curve in $\Sigma_{1,n+1}$ which is nullhomotopic in $\Sigma_{1,n}$ is either nullhomotopic in $\Sigma_{1,n+1}$ or parallel to the boundary component which was capped. The fractional Dehn twist coefficient implies that we must have at most one boundary parallel curve in the factorization, so at most one of the trivial twists lifts to something nontrivial and each of the nontrivial twists either remains boundary parallel or encloses both its original boundary and the capped boundary. Repeating for each of the boundary components of $\Sigma_{1,n+1}$ we see that the latter case can never happen.
The factorization can use only twists parallel to the boundary components and so must be exactly one twist per boundary component.
\end{proof}

\begin{proposition}[Genus 1 Case: $t^3 = x(x^2-y^2)$] The monodromy factorization of the singularity $t^3 = x(x^2-y^2)$ is the boundary multitwist on a genus 1 surface with three boundary components.
\end{proposition}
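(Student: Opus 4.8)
The plan is to compute the geometric monodromy of the fibration $\pi_t$ around $t = 0$ directly and identify it with the boundary multitwist on $\Sigma_{1,3}$; the rigidity in the lemmas above then shows this is the whole story. First I would fix the generic fiber. For $t_0 \neq 0$ the fiber is the affine plane cubic $\{x^3 - xy^2 = t_0^3\}$; homogenizing in $\CP^2$, the projective cubic $x^3 - xy^2 = t_0^3 w^3$ is smooth and meets the line $w = 0$ in the three points $[0{:}1{:}0]$, $[1{:}1{:}0]$, $[1{:}{-}1{:}0]$. Hence the fiber is a smooth genus one curve with three punctures, namely $\Sigma_{1,3}$, in agreement both with the branched double cover over the annulus $\A_{t_0}$ (branched over the three roots of $x^3 = t_0^3$, giving $\chi = -3$ and three boundary circles) and with the resolution of Section~\ref{sec:resolutions}: a single elliptic curve of self-intersection $-3$ with three arrows.

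The key structural point is that $g(x,y) = x^3 - xy^2$ is homogeneous of degree three, so the monodromy of its Milnor fibration is periodic and is realized by the diagonal action $h(x,y) = (\omega x, \omega y)$, $\omega = e^{2\pi i/3}$, which generates the $\ZZ/3$ deck group of the cover $\Sigma_{1,3} \to \CP^1 \setminus \{[0{:}1],[1{:}1],[1{:}{-}1]\}$, $(x,y)\mapsto[x{:}y]$. In particular $h$ fixes each of the three ends (the three lines $x = 0$, $x = \pm y$ are $h$-invariant), and the induced map $\overline h$ on the closed torus has order three, so $\overline h^{\,3} = \mathrm{id}$. Because the fiber of our fibration is $\{g = t_0^3\}$, a single loop of $t$ about the origin makes the value of $g$ wind three times, so the monodromy of $\pi_t$ about $t = 0$ is exactly $h^{3}$.

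Since $h^3$ is the identity as a self-map of the open fiber, its class in $\MCG(\Sigma_{1,3})$ is concentrated in collars of the three boundary circles, whence $h^3 = \tau_{\partial_1}^{n_1}\tau_{\partial_2}^{n_2}\tau_{\partial_3}^{n_3}$ for integers $n_i$, and it remains to show $n_1 = n_2 = n_3 = 1$. Here $h$ rotates each boundary circle through $2\pi/3$ — it is the deck generator restricted to the connected triple cover of each puncture circle — so $h$ is a cube root of the boundary multitwist with $\overline h$ of order three; this is the analogue of the node computation for $t^2 = xy$, where the corresponding map is the square root $\tau_c$ of $\tau_{\partial_1}\tau_{\partial_2}$ and one full loop of $t$ produces one full twist per boundary. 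This forces $n_i = 1$ and gives
\[
h^3 = \tau_{\partial_1}\tau_{\partial_2}\tau_{\partial_3},
\]
the boundary multitwist on $\Sigma_{1,3}$; the twists are right handed because the local model of $h$ at each fixed end is multiplication by $\omega$, so the monodromy is right veering.

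The step I expect to be the main obstacle is precisely the count $n_i = 1$. The diagonal flow $\Phi_s(x,y) = (e^{2\pi i s/3}x, e^{2\pi i s/3}y)$ returns, after one loop of $\pi_t$, to the identity on the open fiber (its cube is the identity self-map of $\CC^2$), so all of the boundary twisting is a collar effect invisible to the affine picture; it must be extracted from the trivialization along the link, equivalently from the Seifert data of the resolution graph of Section~\ref{sec:resolutions} (central curve of multiplicity one and self-intersection $-3$), or by lifting the full rotation of the three branch points through the branched double cover. Any of these routes pins the fractional Dehn twist coefficient of $h$ at $1/3$ per boundary and hence $n_i = 1$. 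With the geometric monodromy identified as the boundary multitwist, the proposition follows, and Claim~\ref{lem:fact1n} records the complementary fact that this admits no refinement into nontrivial separating positive twists, so the central fiber does not split into interior Lefschetz nodes.
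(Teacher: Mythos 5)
You correctly identify the fiber as $\Sigma_{1,3}$, correctly reduce the free monodromy of $\pi_t$ around $t=0$ to $h^3$ with $h(x,y)=(\omega x,\omega y)$ the order-three diagonal map, and correctly observe that the rel-boundary class must then be $\tau_{\partial_1}^{n_1}\tau_{\partial_2}^{n_2}\tau_{\partial_3}^{n_3}$; this homogeneity argument is a genuinely different (and attractive) route from the paper's, which instead deforms to $t=x(x^2+s-z)$, computes the braid monodromy $b$ of the quotient annulus fibration, cubes it, and checks that $b^3$ lifts to boundary-parallel twists (Figure~\ref{fig:psi0}). But your justification of the crucial count $n_i=1$ is not a proof. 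The rotation angle of a free periodic representative at a boundary circle determines the fractional Dehn twist coefficient only modulo $1$: it gives $n_i\equiv 1 \pmod 3$, and the integer part is exactly what has to be computed, since the flow $\Phi_s$ does not preserve a truncated fiber and all of the rel-boundary twisting lives in the collar. (Your own model case shows the trap: for $xy$ the flow rotates the boundary of the annulus fiber by only $\pi$, yet the rel-boundary monodromy of the node is a \emph{full} core twist.) You acknowledge this is the main obstacle and name three routes -- link trivialization, the resolution's Seifert data, lifting through the branched cover -- but you carry out none of them, so the step that constitutes the analytic content of the proposition is missing.

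There is a second, more structural gap: the proposition is about the monodromy \emph{factorization} attached to the resolution, and identifying the total monodromy as the boundary multitwist does not imply it, contrary to your closing claim that ``the proposition follows.'' The boundary multitwist on $\Sigma_{1,3}$ admits another positive factorization, into twelve twists along \emph{nonseparating} curves, via Gervais' star relation \cite{Gervais} -- precisely the word $\psi_0^3$ mentioned at the end of the paper's proof -- and that factorization corresponds to the Lefschetz fibration on the smoothing (the Milnor fiber, with $b_2=8$), not on the resolution (with $b_2=1$). Lemma~\ref{lem:fact1n} cannot rule this out, because its hypothesis is that all twists are separating. The paper closes this gap by computing the resolution in Section~\ref{sec:g=1}, getting $b_1=2$ and $b_2=b_2^-=1$, so that an Euler characteristic count forces any factorization corresponding to the resolution to have exactly three Dehn twists, and only then invoking the rigidity lemma that the unique positive factorization of the boundary multitwist into three twists is the three boundary twists. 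Your proposal relegates this to a ``complementary fact,'' but without it the argument proves at best a statement about the mapping class, not about the factorization.
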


\begin{proof} First, we show that the monodromy of the link of the singular fiber at $t=0$ is the boundary multitwist. We do this by first following the procedure laid out in ``base case" section \ref{sec:basecase} applied to the singularity $t = x(x^2-y^2)$. The monodromy for $t^3 = x(x^2-y^2)$ will be its third power and we will see that the monodromy of the link of $t^3 = x(x^2-y^2)$ is the boundary multitwist. We will then calculate the resolution of $t^3 = x(x^2-y^2)$, the topology of which will tell us that the factorization must be along three separating curves in $\Sigma_{1,3}$. Invoking Lemma~\ref{lem:fact1n} proves the proposition.

So to begin, we consider a smooth fiber of the simpler fibration $t = x(x^2-y^2)$. This is the curve $x(x^2-y^2)=t \subset \CC^2$. The link of this singularity is the (3,3) torus knot, and so the fiber is a genus 1 surface with three boundary components, $\Sigma_{1,3}$. Each fiber, smooth or not, is invariant under the involution $(x,y) \rightarrow (x,-y)$ and the quotient curve is $x(x^2 - z) = t \subset \CC^2 = \left<x,z\right>$, where the cover is given by $z = y^2$. The fixed point set of the involution is defined by the equation $y=0$ and its image under the quotient is $z=0$. The quotient fibration on $\CC^2$ is the Milnor fibration for the polynomial $x(x^2 - z)$. For $t\neq 0$, the fiber of $x(x^2 - z)$ is again an annulus which can be parametrized by $(x,x^2 - 1/x)$ for $x \in \CC^\times$. The branched cover has three branch points corresponding to the intersection with the curve $z=0$, namely the solutions to $x^3 = t$. When $t=0$, the fiber is a nodal curve consisting of two planes intersecting transversely (the line $x=0$ and the plane parametrized as the graph $(x,x^2)$). The point of intersection is also the only intersection point with the branch locus. The fibration by $x(x^2 - z)$ is then a Lefschetz fibration with a single Lefschetz critical point living above $t=0$. Using the deformation $(x(x^2 +s - z))$, we deform the singular fiber at $t=0$ so that is is transverse to the branch curve: one component has one of the branch points and the other has two. In the double cover then, one component is a disk and the other an annulus and they meet in two nodes. In particular, the cover from the $\Sigma_{1,3}$ is the one given in Figure~\ref{fig:psi0} where one boundary component covers 2:1 and the other two components cover 1:1. The vanishing cycle for the Lefschetz singular fiber at $t=0$ is shown, along with its cover.

We continue as before, determining the monodromy corresponding to the Lefschetz fibration on $\CC^2$ given by $t = x(x^2 +s -z)$. Choosing a small, negative (real) value for $s$ there are two branch points along the imaginary axis and a Lefschetz singular fiber over 0. In particular, continuing with the parametrization of the curve $t = x(x^2 +s -z)$ as $(x, x^2 + s - t/x)$ in the singular fiber over $t=0$, we can see which marked point bubbles off by itself, indicating which boundary component has a connected cover. The braid monodromy $b$ is shown in Figure~\ref{fig:psi0}, both factored as braid halftwists and Dehn twists and also via a partial cut system showing an arc in the annulus and its image after the monodromy. This more easily allows us to see $b^3$, the braid monodromy around the link of the singularity for $t^3 = x(x^2-z)$, and write it as three Dehn twists as shown. In the double cover, we see that $b^3$ lifts to a product of boundary parallel Dehn twists. Thus the monodromy of the singularity $t^3 = x(x^2-y^2)$ can be factored as a boundary multitwist. 

We calculate the resolution in Section~\ref{sec:g=1} and see that $b_1 = 2$, $b_2 = 1 = b_2^-$, so any factorization corresponding to the resolution must have three Dehn twists. By Lemma~\ref{lem:fact1n}, the only factorization of the boundary multitwist into three twists is the boundary single multitwist $\tau_\bdry$ and so this is the monodromy factorization corresponding to the Lefschetz fibration on the resolution. 

(One could go further, calculating the Lefschetz fibration for $t = x(x^2-y^2)$ as a word $\psi_0$ with that $\psi_0^3 = \tau_\bdry$. And indeed this substitution corresponds to the \emph{star relation} of Gervais \cite{Gervais}.)
\end{proof}

\begin{figure}
    \centering
    \includegraphics[width = 3in]{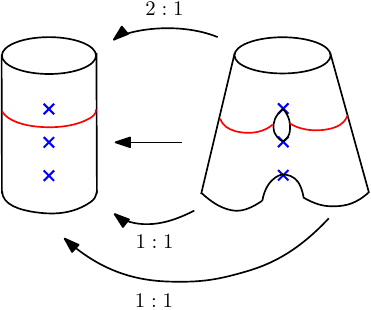}\\
    \vspace{0.2in}
    \includegraphics[width = 4.5in]{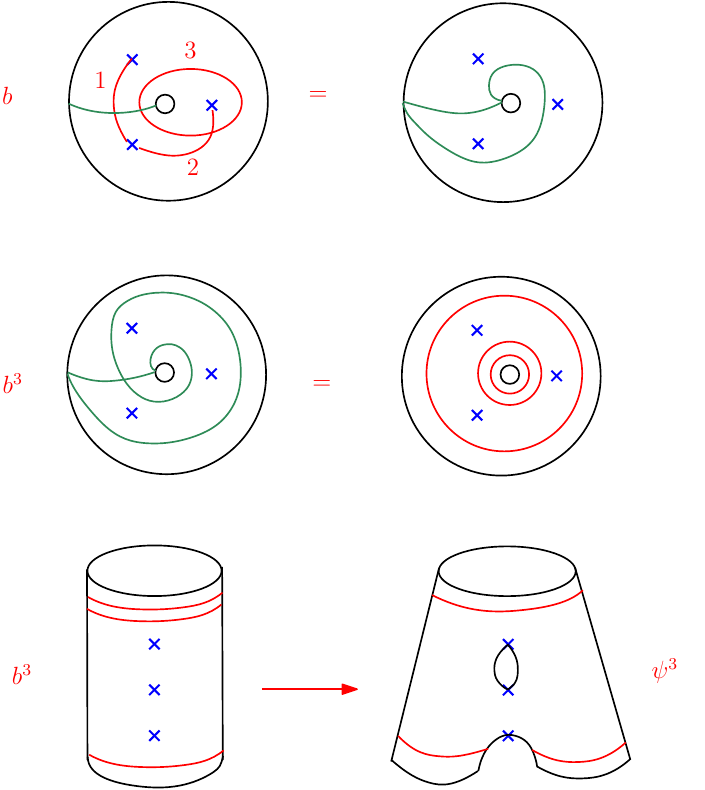}
    \caption{The cover and monodromy corresponding to the singular fibration on $t = x(x^2-y^2)$. The labels $1, 2, 3$ on the arcs and circles indicate the order in which the braid halftwists and Dehn twists occur in the factorization determined by the ordered choice of paths in $\CC^t$ specified in the proof.}
    \label{fig:psi0}
\end{figure}

\subsection{\texorpdfstring{{\boldmath $t^3= x(x^3-y^2)$}}{psi3}}
\label{sec:psi3}
To determine the monodromy factorization of the resolution $X$ of the singularity $V$ defined by $t^3 = x(x^3-y^2)$ we deform to split off a singularity of type $t^3 = x(x^2 - y^2)$ (the example discussed above) via the family $V_s = V(t^3 - x(x^2(x+s) + y^2)$ and track everything in the quotient $t^3 = x(x^2(x+s) -z)$. As in the case of $\psi_1^2$, the hypersurface $t^3 = x(x^2(x+s) -z)$ is singular over $t=0$ and is fibered by annuli $\A_t$ which we think of as curves in the $(x,z)$ plane. For $t\neq 0$, $\A_t$ can be parametrized by $(x,x^2(x+s) -t^3/x)$ for $x \in \CC^\times$. $V(s)$ is the branched double cover of the hypersurface $t^3 = x(x^2(x+s) -z)$ branched along the curve $\Delta_s$ defined by $z=0$. When $s=0$, $\Delta$ intersects all fibers $A_t$ with $t\neq 0$ in four points. Looking in $\CC^\times$, those four points are the solutions to $x^2(x+s) - t^3/x = 0$. The branch locus $\Delta$ is braided with respect to the annulus fibration and if we choose the deformation value $s$ to be a small positive real number, $\Delta$ has three points of braiding which occur along $t$-values with angular coordinate $\pi/3$, $\pi$ and $5\pi/3$. Each braid point  will contribute a Lefschetz singularity to the branched double cover which we can determine by finding the arc along which the braiding occurs.

The singular fiber over $t=0$ is complicated. It has two components, a curve given by $x=0$ with one branch point (at the origin), and a curve $x^2(x+s) -z$ with two branch points, one of multiplicity 2 at $x=0$ and one of multiplicity 1 at $x = -s$. Near the origin, $V_s$ has a singularity of type $t^3 = x(x^2 - y^2)$, and we have already seen the local model of the resolution and its deformation into a Lefschetz fibration in Section \ref{sec:psi0}. Call the corresponding resolution $X_s$. 

We pull each of these back to the reference fiber $t=1$ along the arcs shown in Figure~\ref{fig:psi13} which gives the factorization both of the quotient and the cover. To identify the particular subsurface corresponding to the $t^3 = x(x^2 - y^2)$ singularity, we watch the movie along path number 4 from $t=1$ to $t=0$ and we see that the points labeled 1, 2 and 3 converge to $x=0$ and exit the fiber along $\partial_1$. 

Putting the picture together, we see the monodromy factorization corresponding to the Lefschetz fibration on the resolution $X_s$. Due to its similarity to the base case, we denote the factorization by $\psi_{\tilde{1}}$ where
 \[\psi_{\tilde{1}} = \tau_2 \tau_3 \tau_4 \tau_5 \tau_{5'} \tau_{\partial_1}\]

As before, we would like this deformation to lift to a deformation on the original resolution $X$ and as before we will use Laufer's theorem (Theorem~\ref{thm:laufer}). The resolution determined in Section~\ref{sec:resolutions} has $b_1 = 0$ and $b_2 = b_2^- = 1$ (see Figure \ref{table2}, the first row). Using the Kirby diagram associated to the monodromy factorization $\psi_{\tilde{1}}$ and canceling handles (see \cite{GS}), we see that $X_s$ has the same values of $b_1$, $b_2$ and $b_2^{-}$ (and indeed that the generator of $H_2$ has self-intersection $-3$) and so the deformation on $V_s$ lifts to a deformation on the resolution. In Section~\ref{sec:negativedef}, we will show that the total spaces of the Lefschetz fibrations associated to all the words used in Theorem~\ref{thm:main2} are negative (semi-)definite. That argument applies here as well.

\begin{figure}
    \centering
    \includegraphics[width=6in]{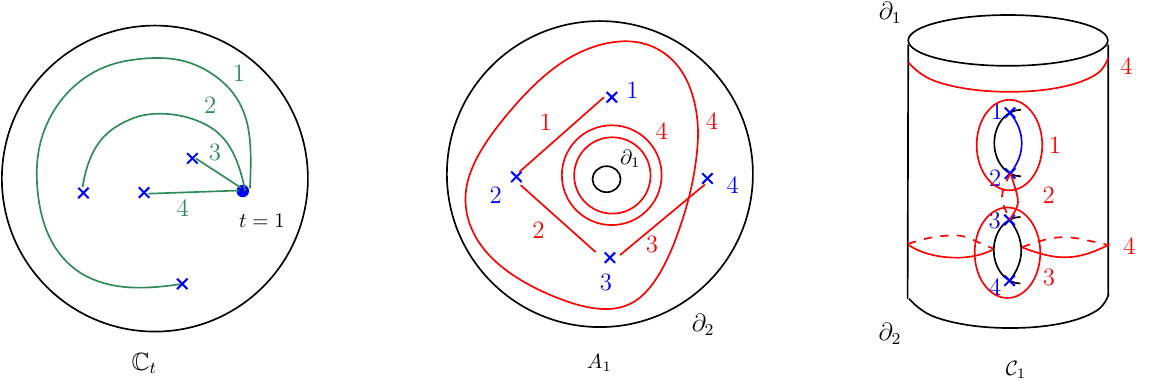}
    \caption{Paths in the $t$-plane used to find the factorization $\psi_{\tilde{1}}$ of $\psi_1^3$, along with the braid halftwists and Dehn twists used to identify the factorization in the cover. The singularity over $t=0$ corresponds to a full twist about the enclosed points along with two twists along the inner boundary $\partial_1$ and lifts to three Dehn twists that separate off a genus one surface with three boundary components (one of which is $\partial_1$). The labels $1, 2, 3, 4$ on the arcs and circles indicate the order in which the braid halftwists and Dehn twists occur in the factorization determined by the ordered choice of paths in $\CC_t$ shown on the left.}
    \label{fig:psi13}
\end{figure}

\begin{lemma} \label{lem:hurwitz} The monodromy factorizations $\psi_1$ and $\psi_1^k(\psi_1)= \psi_1^{-k} \psi_1 \psi_1^k$ are Hurwitz equivalent (in the strong sense, no cyclic permutation is needed). Similarly $\psi_{\tilde{1}}$ and $\psi_1^k(\psi_{\tilde{1}})= \psi_1^{-k} \psi_{\tilde{1}} \psi_1^k$.
\end{lemma}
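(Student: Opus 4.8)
The engine for both claims is the standard fact that a \emph{full rotation} of a positive factorization, carried out by Hurwitz moves, realizes global conjugation by the total monodromy. I would first record the single-factor slide in the form that follows from the Hurwitz moves of Section~\ref{background}: moving the leading factor all the way to the right gives $\tau_1 \tau_2 \cdots \tau_n \to \tau_2 \cdots \tau_n \cdot \Phi(\tau_1)$, where $\Phi = \tau_1 \cdots \tau_n$ is the total product and $\Phi(\tau_1) = \Phi^{-1}\tau_1\Phi$. This uses $(\tau_2\cdots\tau_n)^{-1}\tau_1(\tau_2\cdots\tau_n) = \Phi^{-1}\tau_1\Phi$, valid because $\tau_2\cdots\tau_n = \tau_1^{-1}\Phi$. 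Iterating this slide $n$ times (one full rotation, using that each intermediate factorization again has total product $\Phi$) carries $\tau_1\cdots\tau_n$ to $\Phi(\tau_1)\cdots\Phi(\tau_n)$, i.e.\ to the factorization obtained by conjugating \emph{every} factor by $\Phi$. Since a full rotation returns each factor to its original index, the equivalence has no leftover cyclic offset; this is exactly the ``strong sense'' demanded in the parenthetical of Lemma~\ref{lem:hurwitz}.

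For the first statement this finishes the proof at once: the total monodromy of $\psi_1 = \tau_4\tau_3\tau_2\tau_1\tau_{1'}$ is the element $\psi_1$ itself, so one full rotation gives $\psi_1 \sim \psi_1(\psi_1)$, and $k$ successive full rotations give $\psi_1 \sim \psi_1^k(\psi_1)$, with the ordering preserved throughout.

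For the second statement the same move does part of the job but not all of it, and this is where the real difficulty sits. The total monodromy of $\psi_{\tilde{1}} = \tau_2\tau_3\tau_4\tau_5\tau_{5'}\tau_{\partial_1}$ is the element $\psi_1^3$ (it is the monodromy around the central fiber of $t^3 = x(x^3-y^2)$, three quarter-turns of the four branch points). Hence a full rotation of $\psi_{\tilde{1}}$ only produces conjugation by $\psi_1^3$, yielding $\psi_{\tilde{1}}\sim\psi_1^{3j}(\psi_{\tilde{1}})$ for free but leaving the residues mod $3$ untouched. To reach all $k$ I would instead reduce to the single-step statement $\psi_{\tilde{1}}\sim\psi_1(\psi_{\tilde{1}})$: conjugation by $\psi_1$ is a group automorphism preserving Dehn twists and Hurwitz moves, so it sends Hurwitz-equivalent factorizations to Hurwitz-equivalent ones, and applying it repeatedly upgrades $\psi_{\tilde{1}}\sim\psi_1(\psi_{\tilde{1}})$ to $\psi_1^j(\psi_{\tilde{1}})\sim\psi_1^{j+1}(\psi_{\tilde{1}})$ and hence to $\psi_{\tilde{1}}\sim\psi_1^k(\psi_{\tilde{1}})$ for every $k$. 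The missing ingredient is precisely this \emph{unit} quarter-turn, conjugation by $\psi_1$ rather than by $\psi_1^3$, and I expect it to be the main obstacle of the proof.

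To supply it I would argue geometrically on the reference fiber. Over $t=1$ the four branch points of $\A_1$ sit at the fourth roots of unity, and $\psi_1$ is the lift to the double branched cover of the order-four rotation $r$ of $\A_1$ by $\pi/2$ that cyclically permutes these points (this is the base-case analysis of Section~\ref{sec:basecase} and the reason $\psi_1$ is an eighth root of the boundary multitwist). Conjugating $\psi_{\tilde{1}}$ by $\psi_1$ therefore applies $r$ to the entire ordered arc-and-circle system in $\A_1$ defining $\psi_{\tilde{1}}$ (Figures~\ref{fig:branched cover} and~\ref{fig:psi13}); because that skeleton is chosen symmetrically with respect to $r$, the rotated system should be isotopic, after a bounded sequence of the arc-sliding Hurwitz moves that re-order the rotated arcs into the reference order, back to the original, and lifting reproduces $\psi_{\tilde{1}}$. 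The genuine work is this last verification: confirming on the explicit pictures that the $\pi/2$-rotated arc system returns to the reference skeleton through legitimate lifted Hurwitz moves, with the cyclic ordering preserved so that the equivalence is again strong. Once $\psi_{\tilde{1}}\sim\psi_1(\psi_{\tilde{1}})$ is established, the iteration above completes all $k$, and I would note that in every application inside Theorem~\ref{thm:main2} the conjugations that actually occur are by powers of $\psi_1^3$, which the rotation argument already handles unconditionally.
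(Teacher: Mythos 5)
Your treatment of the first claim is correct and genuinely different from (and cleaner than) the paper's: since the total monodromy of the factorization $\tau_4\tau_3\tau_2\tau_1\tau_{1'}$ is the element $\psi_1$ itself, the standard ``full rotation'' of a positive factorization by Hurwitz moves realizes conjugation by $\psi_1$ with no cyclic offset, and iterating handles every $k$. The paper instead proves this case by exhibiting explicit Hurwitz moves on the quotient braid factorization $\b_1\b_2\b_3\t$ in the annulus (Figure~\ref{fig:hurwitz-conjugations}); your argument buys generality and economy, while the paper's buys a concrete template that also settles the $\psi_{\tilde{1}}$ case.

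The gap is in the second claim, and it sits exactly where you flagged it. As you correctly observe, the total monodromy of the factorization $\psi_{\tilde{1}}$ is the element $\psi_1^3$, so full rotation only yields conjugation by $\psi_1^{3j}$, and everything reduces to the single step $\psi_{\tilde{1}} \sim \psi_1(\psi_{\tilde{1}})$. But your proposed justification of that step --- rotate the arc system by the quarter turn $r$ and appeal to symmetry of the skeleton --- does not close: the arc collection defining $\psi_{\tilde{1}}$ (three half-twist arcs joining consecutive branch points plus the compound twist at $\partial_1$) is \emph{not} $r$-invariant, since the rotated triple of arcs joins different pairs of branch points (one of them now sweeps past $\partial_1$), so ``the rotated system is isotopic back to the original'' is false as stated, and what remains is precisely to produce the Hurwitz moves. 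Producing them is the entire content of the paper's proof: slide the twist $\t$ past $\b_3$ to obtain a twist $\t'$ based at the other marked region, then slide the conjugated $\b_3$ back over $\t'$ and past $\b_2$ and $\b_1$, arriving at $\b_{3'}\b_1\b_2\t'$, which is verified pictorially in Figure~\ref{fig:hurwitz-conjugations} to be the quotient factorization of $\psi_1(\psi_{\tilde{1}})$; the identical sequence of moves simultaneously re-proves the $\psi_1$ case. Finally, your closing remark that the applications in Theorem~\ref{thm:main2} only ever require conjugation by powers of $\psi_1^3$ is incorrect: in Section~\ref{sec:psi4}, re-routing the path to the singular fiber at $t=0$ around the $\psi_1$-type singularity at $t=-s$ conjugates $\psi_{\tilde{1}}$ by $\psi_1^{\pm 1}$, so the unit step you left unverified is genuinely needed by the paper.
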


\begin{proof}
In the base case of $t=x(x^3 - y^2)$ of Section~\ref{sec:basecase} we showed that the factorization for $\psi_1$ is the double branched cover of the braid factorization shown in Figure~\ref{fig:psi1} and also that the monodromy diffeomorphism of the braid consists of a clockwise quarter turn of the four points (finished with a remaining 3/4 twist as you approach $\partial_1\A$). We also mention that the skeleton used in the factorization $\tau_4 \tau_3 \tau_2 \tau_1 \tau_{1'}$ does not agree with the skeleton shown in Figure~\ref{fig:branched cover}. In fact, if we denote by $\phi$ the monodromy found in Section~\ref{sec:basecase}, then $\phi = \psi_1 \psi_1 \psi_1^{-1}$. With that in mind, we work with the description  $\psi_1 = \tau_4 \tau_3 \tau_2 \tau_1 \tau_{1'}$ using the standard skeleton of Figure~\ref{fig:branched cover}, noting a posteriori that this implies that the monodromy of the singularity  $t=x(x^3 - y^2)$ can be written as $\psi_1 = \tau_4 \tau_3 \tau_2 \tau_1 \tau_{1'}$ (again using the standard skeleton).

To see the relationship between $\psi_1$ and $\psi_1(\psi_1) =  \psi_1^{-1} \psi_1 \psi_1$, we begin with the factorization of the braid monodromy of $\psi_1$ as $\b_1 \b_2 \b_3 \t$ on the annulus, using the standard skeleton from Figure~\ref{fig:branched cover}, where $\b_i$ is the braid halftwist along the (red) arc labeled $i$. First we slide the full twist $\t$ past the third braid halftwist $\b_3$, bringing it to the marked point on the left. Next we slide $\b_3$ back over the new Dehn twist $\t'$, and then past $\b_2$ and $\b_1$. This results in a new factorization $\b_{3'} \b_1 \b_2 \t'$ which agrees with the factorization $\psi_1(\psi_1)$ on the double branched cover. The sequence of factorizations is show in the first part of Figure~\ref{fig:hurwitz-conjugations}. The same sequence of moves when applied to $\psi_{\tilde{1}}$ yields $\psi_1 (\psi_{\tilde{1}})$ and the corresponding diagrams are given in the same figure.
\end{proof}

\begin{figure}
    \centering
    \includegraphics[width = 5.5in]{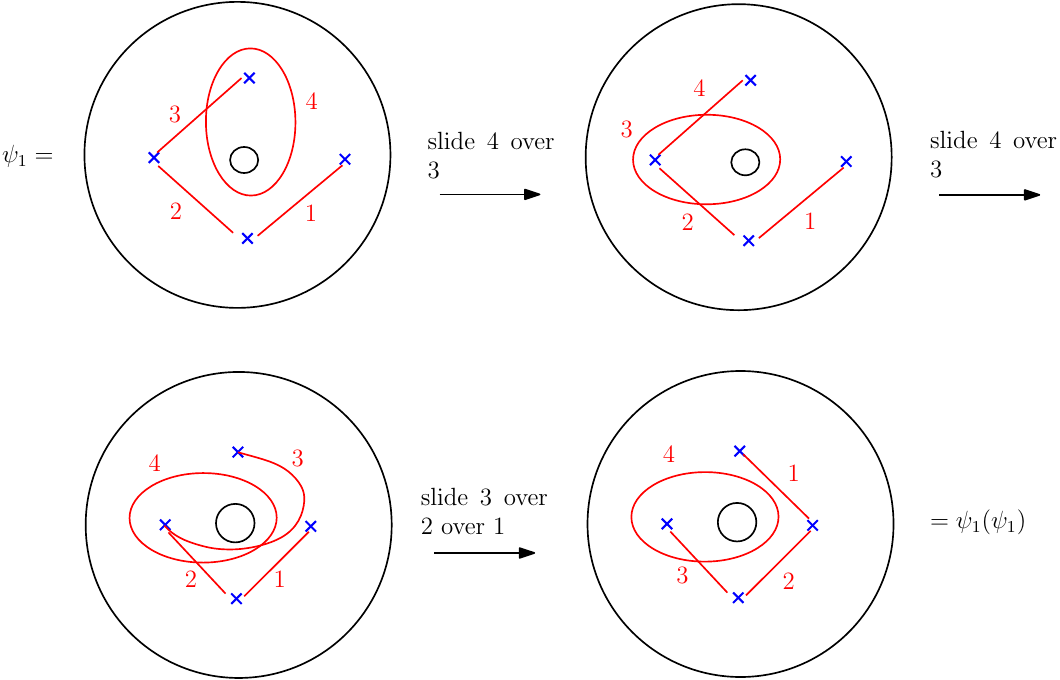}\\
    \vspace{0.4in}
    \includegraphics[width = 5.5in]{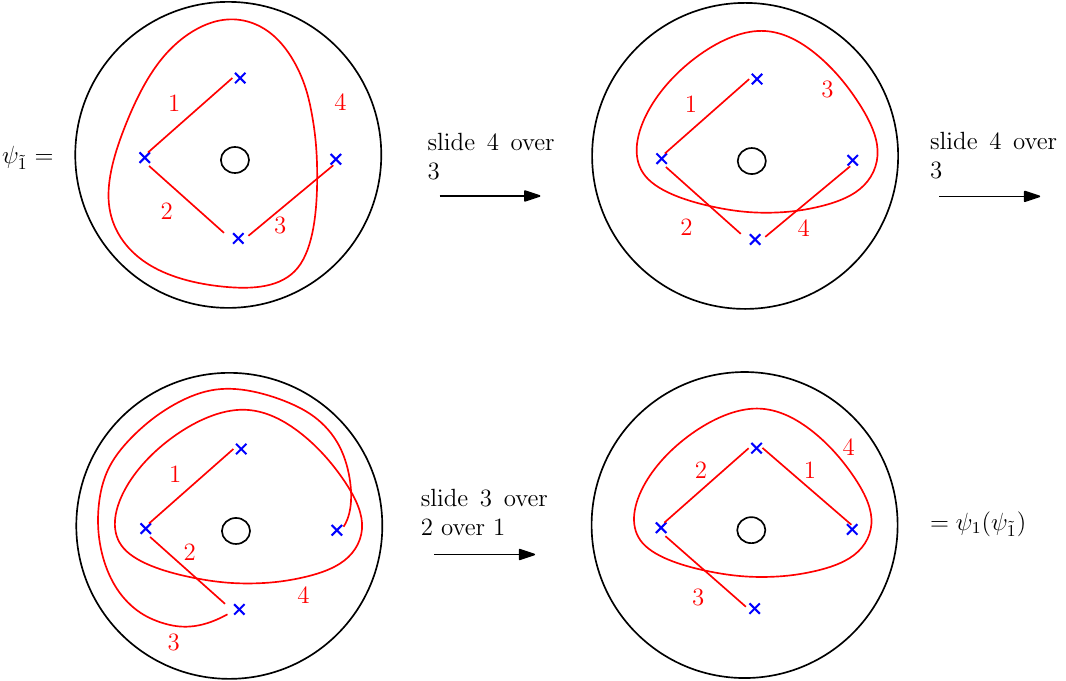}
    \caption{On the top, the sequence of Hurwitz moves that relates $\psi_1$ to $\psi_1(\psi_1)$. On the bottom the same for $\psi_{\tilde{1}}$ and $\psi_1(\psi_{\tilde{1}})$. The labels $1, 2, 3, 4$ indicate the order in which the braid halftwists and Dehn twists occur in the corresponding factorization. }
    \label{fig:hurwitz-conjugations}
\end{figure}

\subsection{\texorpdfstring{{\boldmath $t^4 = x(x^3-y^2)$}}{psi4}}
\label{sec:psi4}

Given the previous work, this and all following cases are fairly straight-forward. Using the deformation $t^3(t+s) = x(x^3-y^2)$, splits this into a type $\psi_{\tilde{1}}$ singularity at $t=0$ and a type $\psi_1$ singularity at $t=-s$. Depending on the path chosen from the reference fiber at $t=1$ to the singular fibers at $t=0$ and $t=-s$, we would get monodromy factorizations of a product of conjugates of $\psi_{\tilde{1}}$ and $\psi_1$. However, different paths yield conjugates by powers of $\psi_1$ and by Lemma~\ref{lem:hurwitz}, those are all Hurwitz equivalent. Thus we see the factorization

\[\psi_1^4 = \psi_{\tilde{1}} \psi_{{1}} = \tau_2 \tau_3 \tau_4 \tau_5 \tau_{5'} \tau_4 \tau_3 \tau_2\tau_1\tau_{1'} \tau_{\partial_1}\]

This factorization has length 11, yielding a 4-manifold with $b_1 =0$ and $b_2 = 6$. We will show later that the 4-manifold is negative definite (see Section~\ref{sec:negativedef}), and since these values agree with the resolution calculated in Section~\ref{sec:resolutions} (Figure \ref{table2}, graph of $\psi^4$), this along with Proposition~\ref{prop:Laufer} is sufficient to conclude that the deformations given yield a flat deformation of the resolution $X$ into the Lefschetz fibration associated to the monodromy factorization above.

\subsection{\texorpdfstring{{\boldmath $t^5= x(x^3-y^2)$}}{psi5}}
\label{sec:psi5}
We follow a similar deformation as was used in the previous case, $t^3(t+s)(t+2s) = x(x^3-y^2)$ to get the factorization

\[\psi_1^5 = \psi_{\tilde{1}} \psi_{{1}}^2.\]

The arguments above can be applied here as well and we conclude that the Lefschetz fibration corresponding to the factorization on the right is indeed a deformation of the resolution.

\subsection{\texorpdfstring{{\boldmath $t^6= x(x^3-y^2)$}}{psi6}}
\label{sec:psi6}
In this case the deformation $t^3(t+s)^3 = x(x^3-y^2)$ splits this into two type $\psi_{\tilde{1}}$ singularities. The corresponding values of $b_1$ and $b_2 = b_2^-$ agree with the resolution and so 

\[\psi_1^6 = \psi_{\tilde{1}}^2.\]

\subsection{\texorpdfstring{{\boldmath $t^7= x(x^3-y^2)$}}{psi7}}
\label{sec:psi7}
Here we use the deformation $t^3(t+s)^3(t+2s) = x(x^3-y^2)$ splits this into two type $\psi_{\tilde{1}}$ singularities and a type $\psi_1$ singularity. The corresponding values of $b_1$ and $b_2 = b_2^-$ agree with the resolution and so 

\[\psi_1^7 = \psi_{\tilde{1}}^2\psi_{{1}}.\]

\subsection{\texorpdfstring{{\boldmath $t^8= x(x^3-y^2)$}}{psi8}}
\label{sec:psi8}

As a monodromy, the quotient braid associated to $\psi_1^4$ is a full-circle push map of the four marked points plus some twists around the boundary components. Tracing out the image of an arc (as we did in Section~\ref{sec:psi0}), shows that $\psi_1^4$ is a right-handed push map (as measured from $\partial_2$) along with four right-handed Dehn twists about $\partial_1$. Together this gives a factorization of the braid for $\psi_1^4$ as $\tau_{\partial_1}^3 \tau_{\partial_2}$, and hence $\psi_1^8 = \tau_{\partial_1}^6 \tau_{\partial_2}^2$. Since the cover $\Sigma_{2,2} \mapsto A$ wraps each boundary component of $\Sigma_{2,2}$ twice around the corresponding boundary component of $A$, neither $\tau_{\partial_1}$ nor $\tau_{\partial_2}$ (on $A$) lift to diffeomorphisms of $\Sigma_{2,2}$, but $\tau_{\partial_1}^2$ lifts to $\tau_{\partial_1}$ and $\tau_{\partial_2}^2$ lifts to $\tau_{\partial_2}$. Thus as a mapping class element $\psi_1^8 = \tau_{\partial_1}^3\tau_{\partial_2}$ on $\Sigma_{2,2}$. In Section~\ref{sec:resolutions} we showed that the resolution of this singularity has $b_1 = 4$ and $b_2 = b_2^{-} = 3$ (cf. the last row of Figure \ref{table2}), and thus corresponds to a monondromy factorization consisting of four separating curves. The following lemma then shows that the monodromy factorization of this singularity must be

\[\psi_1^8 = \tau_{\partial_1}^3\tau_{\partial_2}.\]

\begin{lemma} The only factorization of the mapping class  $\tau_{\partial_1}^3\tau_{\partial_2}$ on $\Sigma_{2,2}$ into four Dehn twists along separating curves is $\tau_{\partial_1}^3\tau_{\partial_2}$.
\end{lemma}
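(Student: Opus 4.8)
The plan is to read off the multiset of vanishing cycles directly from the fractional Dehn twist coefficients (FDTC) at the two boundary components, exactly as in the proof of Claim~\ref{lem:fact1n}. Write the factorization as $\tau_{c_1}\tau_{c_2}\tau_{c_3}\tau_{c_4} = \tau_{\partial_1}^3\tau_{\partial_2}$ with each $c_i$ a separating simple closed curve in $\Sigma_{2,2}$. Every separating curve in $\Sigma_{2,2}$ is of exactly one of the following isotopy types: parallel to $\partial_1$, parallel to $\partial_2$, or \emph{interior} (bounding a genus-one subsurface with no boundary, a genus-one subsurface containing a single boundary component, or a pair of pants containing both boundary components). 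The point I will exploit is that among these, only the two boundary-parallel types meet a collar of a given boundary component.

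First I would record two standard properties of the FDTC $c(\cdot,\partial_i)\colon \MCG(\Sigma_{2,2}) \to \RR$: it is shifted by integers under twisting along that same boundary, $c(\tau_{\partial_i}^{\,n}\psi,\partial_i) = n + c(\psi,\partial_i)$, and it vanishes on any mapping class that is the identity on a collar of $\partial_i$. Since there are only finitely many $c_j$ and every interior separating curve can be isotoped off $\partial_1\cup\partial_2$, I can fix a single collar neighborhood $N$ of $\partial_1\cup\partial_2$ disjoint from all the interior curves appearing in the list. Using that boundary twists are central, I rewrite the product as $\tau_{c_1}\cdots\tau_{c_4} = \tau_{\partial_1}^{m_1}\tau_{\partial_2}^{m_2}\rho$, where $m_i$ is the number of $c_j$ parallel to $\partial_i$ and $\rho$ is the product of the remaining interior twists, which is the identity on $N$.

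The key step is the evaluation $c(\tau_{\partial_1}^{m_1}\tau_{\partial_2}^{m_2}\rho,\partial_1) = m_1 + c(\rho,\partial_1) = m_1$, using the integer-shift property together with $c(\rho,\partial_1)=0$ (since $\rho$ is supported away from $\partial_1$), and symmetrically $c(\cdot,\partial_2) = m_2$. Comparing with the left-hand side, whose FDTC at $\partial_1$ is $3$ and at $\partial_2$ is $1$, forces $m_1 = 3$ and $m_2 = 1$. Then $m_1 + m_2 = 4$ equals the total number of twists, so $\rho$ is empty and no interior curve occurs; thus three of the $c_j$ are parallel to $\partial_1$ and one to $\partial_2$, and by centrality the product is exactly $\tau_{\partial_1}^3\tau_{\partial_2}$, as claimed.

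The main obstacle I anticipate is justifying that the interior separating twists contribute \emph{exactly} $0$, and not merely nonnegatively, to each boundary FDTC; the subtle case is a curve bounding a genus-one subsurface containing one boundary component, which at first glance appears to ``wrap'' $\partial_1$ but is in fact isotopic off the collar $N$ and so contributes nothing. The cleanest way to make this airtight is the common-collar argument above, combined with the integer-shift property for the central boundary twists. As a consistency check (and an alternative for a reader who prefers not to quote FDTC properties), one can cap the boundaries in turn: capping $\partial_1$ sends the relation to a factorization of the single boundary twist $\tau_{\partial_2}$ on $\Sigma_{2,1}$ into separating twists, and there centrality together with the fact that a nonempty positive product of Dehn twists is never trivial pins down the boundary-parallel count, in agreement with the FDTC computation.
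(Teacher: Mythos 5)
Your main argument rests on a property of fractional Dehn twist coefficients that is false, and it fails exactly at the step you flagged as the main obstacle. The principle ``$c(\psi,\partial_i)=0$ whenever $\psi$ admits a representative that is the identity on a collar of $\partial_i$'' cannot be true, because \emph{every} mapping class in $\MCG(\Sigma_{2,2})$ admits such a representative: the support of any diffeomorphism can be isotoped rel boundary into the complement of a fixed collar $N$. In particular $\tau_{\partial_1}$ itself, realized as the twist on an annulus pushed slightly into the interior, is the identity on a collar of $\partial_1$ yet has $c(\cdot,\partial_1)=1$; if your principle held, the FDTC would vanish identically. (For the same reason, your opening claim that only boundary-parallel curves ``meet a collar'' is vacuous --- every curve can be isotoped off a collar.) What is true is that a \emph{single} Dehn twist about a curve not parallel to $\partial_1$ has vanishing FDTC at $\partial_1$, but the FDTC is only a quasimorphism, not a homomorphism, so this does not propagate to the product $\rho$. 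Worse, the conclusion you need is genuinely false: take two interior separating curves $c_1,c_2$ that fill $\Sigma_{2,2}$ (so $i(c_1,c_2)\ge 4$). By Thurston's construction $\tau_{c_1}\tau_{c_2}$ is pseudo-Anosov, and it is right-veering because it is a product of positive twists; by Honda--Kazez--Mati\'c, a right-veering pseudo-Anosov map has strictly positive FDTC at every boundary component. So positive twists about interior separating curves can contribute positively to the boundary FDTC, your evaluation $c(\rho,\partial_1)=0$ breaks down, and the counts $m_1=3$, $m_2=1$ do not follow. Since ruling out precisely such interior configurations is the entire content of the lemma, this is not a repairable technicality within your framework.

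The paper's proof avoids FDTC altogether: it caps \emph{both} boundary components, so the relation becomes a factorization of the identity into four separating twists on the closed surface $\Sigma_2$, and then it uses the abelianization $\MCG(\Sigma_2)\to\ZZ/10\ZZ$ \cite{Korkmaz}, under which a genus-one separating twist maps to $2$ (by the chain relation) and a trivial twist to $0$; since $0\in\ZZ/10\ZZ$ is not a sum of four terms from $\{0,2\}$ unless all are $0$, every $c_i$ must be null-homotopic after capping, i.e.\ each $c_i$ encloses only boundary components. Capping one boundary component at a time then pins down the multiplicities (e.g.\ $\tau_{\partial_2}^m=\tau_{\partial_2}$ forces $m=1$ because the boundary twist has infinite order). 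Your closing ``consistency check'' gestures at this capping route, but as written it is also incomplete: centrality plus ``a nonempty positive product of twists is never trivial'' does not exclude a positive product of genus-one separating twists equaling $\tau_{\partial_2}$ on $\Sigma_{2,1}$ (your boundary-parallel count could be $0$); killing the genus-one curves is exactly what the abelianization computation accomplishes, and some homological input of this kind is unavoidable. I would restructure the proof along the paper's lines.
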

\begin{proof}
Capping both boundary components of $\Sigma_{2,2}$ yields a factorization of the identity into four separating Dehn twists. We will show that all Dehn twists in such a factorization have to be trivial. The abelianization of the mapping class group of the closed genus 2 surface $\Sigma_2$ is $\ZZ/10 \ZZ$ (\cite{Korkmaz}), and there is an identification of the two groups in which a Dehn twist about a homologically essential curve is identified with 1. Using the genus 1 chain relation, we can see the Dehn twist about a curve that separates $\Sigma_2$ into two genus 1 surfaces is identified with 2. A trivial Dehn twist is then 0. The only way to write 0 in $\ZZ/10 \ZZ$ as the sum of 4 numbers, each of which is 0 or 2, is as 4 copies of 0. Hence any factorization of the identity into 4 separating Dehn twists must be along trivial (null homotopic) curves. Thus all the twists in the original factorization must separate off either one or both boundary components from the rest of the genus 2 surface. Capping $\partial_1$, each curve is then either parallel to $\partial_2$ or trivial. This yields a factorization of the boundary twist $\tau_{\partial_2}$ and so must consist of exactly one twist parallel to $\partial_2$. Thus only one curve can enclose $\partial_2$ while the others can enclose only $\partial_1$. Capping $\partial_2$ shows that this curve must be parallel to $\partial_2$. Thus the only factorization of the monodromy $\tau_{\partial_1}^3\tau_{\partial_2}$ into four right-handed twists is that exact factorization. 
\end{proof}

\subsection{A version of the hyperelliptic monodromy factorization}
\label{sec:hyperelliptic}

There is a common monodromy factorization of the hyperelliptic involution on a genus two surface $\Sigma_2$. This factorization is
\[ I = \tau_1 \tau_2 \tau_3 \tau_4 \tau_5 \tau_5 \tau_4 \tau_3 \tau_2 \tau_1\]
and $I^2 = id$.

There are lifts of this relation to $\Sigma_{2,1}$ and $\Sigma_{2,2}$ where the quotient is a disk with either 5 or 6 branch points (respectively), and its centralizer determines the usual notion of the hyperelliptic mapping class group in each case, and each lift satisfies $\tilde{I}^2 = M_\partial$ (where $M_\partial$ is the (multi)twist consisting of a single positive twist about each boundary component). 

The monodromy factorization above for $\psi_1^4 = (\tau_2 \tau_3 \tau_4 \tau_5 \tau_{5'} \tau_4 \tau_3 \tau_2 \tau_1 \tau_{1'} \tau_{\partial_1})$ tells that there is a second lift to the mapping class group of $\Sigma_{2,2}$ which generates an involution whose quotient is an \emph{annulus}. (Indeed, this is the same quotient giving the branched covers above.)

Let 
\[ \tilde{I} = \tau_2 \tau_3 \tau_4 \tau_5 \tau_{5'} \tau_4 \tau_3 \tau_2 \tau_1\tau_{1'}\]

so that $\tilde{I} = \psi_{\tilde{1}} \psi_1 \tau_{\partial_1}^{-1}$.

We saw in Section~\ref{sec:psi8} that in the mapping class group $(\psi_{\tilde{1}} \psi_1)^2 = \tau_{\partial_1}^3 \tau_{\partial_2}$ which implies that $\tilde{I}^2 = \tau_{\partial_1} \tau_{\partial_2}$. Thus $\tilde{I}$ is a different lift of the hyperelliptic involution, the one that characterizes those mapping class group elements that arise as lifts of elements of the marked group of the annulus. In summary we proved \cref{newlift}, which we restate below.

\begin{repprop}{newlift} The following relation holds in the mapping class group of the surface $\Sigma_{2,2}$
\[(\tau_2 \tau_3 \tau_4 \tau_5 \tau_{5'} \tau_4 \tau_3 \tau_2 \tau_1\tau_{1'})^2 = \tau_{\partial_1} \tau_{\partial_2}\]
and moreover, the mapping class element represented by $\tilde{I} = \tau_2 \tau_3 \tau_4 \tau_5 \tau_{5'} \tau_4 \tau_3 \tau_2 \tau_1\tau_{1'}$ is isotopic to the involution on $\Sigma_{2,2}$ with four fixed points whose quotient is the annulus. 
\end{repprop}

After we found this relation, N. Monden told us that this was known to them (\cite{AM}, Proposition 41), but our method is different.

\subsection{A discussion of negative definiteness} \label{sec:negativedef}

As resolutions of hypersurface singularities, each of the resolutions constructed in Section \ref{sec:resolutions} are negative definite. To apply Laufer easily in the latter case above, we would like to show that the 4-manifolds that correspond to the monodromy factorizations used in the deformations of $t^k = x(x^3-y^2)$ for $k=3,\dots,7$ are also negative definite, something which we assumed earlier and which we will prove now. This along with the interpretation of Laufer's theorem in Proposition~\ref{prop:Laufer} allows us to quickly conclude that the deformations we use in the following cases are flat deformations of the resolved complex surface. We note that the monodromy factorizations that we will use for these cases are $\psi_{\tilde{1}} \psi_1$, $\psi_{\tilde{1}} \psi_1^2$, $\psi_{\tilde{1}}^2$ and $\psi_{\tilde{1}}^2 \psi_1$. By Lemma~\ref{lem:hurwitz}, each of these is a subword of the word $\psi_{\tilde{1}}^2 \psi_1^2 = \tilde{I}^2 \tau_{\partial_1}^2$. Even though the cases $k=1,2$ were handled independently, as $\psi_1$ and $\psi_1^2$ are also subwords of $\tilde{I}^2 \tau_{\partial_1}^2$, the argument works equally well there.
We will show that the 4-manifold $X_0$ associated to the word $\psi_{\tilde{1}}^2 \psi_1^2$ has a negative semi-definite intersection form, which then implies the following proposition.

\begin{proposition} \label{prop:negdef} The 4-manifolds associated to the monodromy factorizations of $t^k = x(x^3 - y^2)$ for $k=1,\dots,7$ are negative definite.
\end{proposition}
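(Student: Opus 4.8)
The plan is to reduce all cases $k=1,\dots,7$ to a single statement about the 4-manifold $X_0$ associated to the word $\psi_{\tilde{1}}^2\psi_1^2 = \tilde{I}^2\tau_{\partial_1}^2$, exactly as set up in the paragraph preceding the proposition. By \cref{lem:hurwitz} each of the factorizations $\psi_1$, $\psi_1^2$, $\psi_{\tilde 1}\psi_1$, $\psi_{\tilde 1}\psi_1^2$, $\psi_{\tilde 1}^2$ and $\psi_{\tilde 1}^2\psi_1$ is Hurwitz equivalent to a subword of $\psi_{\tilde 1}^2\psi_1^2$, so the total space of each of these smaller Lefschetz fibrations embeds as a codimension-zero submanifold of $X_0$ (attaching the remaining vanishing-cycle $2$-handles builds $X_0$ from the smaller total space). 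Since a codimension-zero submanifold of a manifold with negative (semi-)definite intersection form again has negative semi-definite intersection form on the image of its $H_2$, and since each of these smaller 4-manifolds has $b_2^0=0$ (their boundaries are the rational homology sphere Seifert-fibered links computed in Section~\ref{sec:resolutions}, so $b_2^0=b_1$ of the boundary contributes nothing), it suffices to prove that the intersection form of $X_0$ itself is negative semi-definite.

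So the heart of the argument is the single computation that $Q_{X_0}$ is negative semi-definite. First I would build the intersection form explicitly from the monodromy factorization $\psi_{\tilde 1}^2\psi_1^2$ using the standard handle decomposition of a Lefschetz fibration over $D^2$ (see \cite{GS}): start from $\Sigma_{2,2}\times D^2$ and attach one $2$-handle along each vanishing cycle with framing $-1$ relative to the fiber framing. The form is then the matrix whose $(i,i)$ entry is $-1$ (the relative framing) and whose off-diagonal entry $(i,j)$ records the algebraic intersection of the $i$-th and $j$-th vanishing cycles after pushing them into a common fiber, i.e. $Q = -I + N$ where $N$ is the signed geometric intersection matrix of the ordered curve system. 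I would lay out all $22$ vanishing cycles (the curves $c_2,c_3,c_4,c_5,c_{5'},c_4,c_3,c_2,c_1,c_{1'}$ twice, then the same pattern for $\psi_1^2$) against the explicit curves of Figure~\ref{fig:branched cover}, and compute their pairwise intersection numbers.

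The cleanest route to semi-definiteness, and the one I would try first, is to exploit the relation $(\tilde I)^2=\tau_{\partial_1}\tau_{\partial_2}$ from \cref{newlift}: the word $\tilde I^2\tau_{\partial_1}^2$ is a lift of a \emph{genus-zero} (annulus) monodromy, so $X_0$ is the double branched cover of a very simple Lefschetz fibration over $D^2$ whose fiber is an annulus and whose total space is therefore a plumbing / a neighborhood that one can identify directly. Concretely, $\tilde I$ being an honest involution with annulus quotient means $X_0$ fibers compatibly over the annulus-fibration picture, and the lift of the branch-cover structure of Section~\ref{sec:basecase} realizes $X_0$ as a branched double cover of a disk bundle; negative semi-definiteness then follows because the downstairs piece has negative semi-definite form and branched covering along a symplectic/complex divisor preserves this. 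Alternatively, and more robustly, I would embed $X_0$ into the closed fibration: capping $\partial_2$ converts the factorization (up to relabeling) into the hyperelliptic word for the singularity $\phi_2^2$, $(\tau_1\tau_1\tau_2\tau_3\tau_4)^2$, whose total space was \emph{already shown} negative definite in \cite{SV}. The same capping trick used for $\psi_1^2$ in Section~\ref{sec:psi2} then exhibits $Q_{X_0}$ as a sublattice of a known negative definite lattice, forcing $Q_{X_0}$ negative semi-definite.

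The main obstacle I anticipate is not any single computation but the bookkeeping: with $22$ ordered vanishing cycles the direct intersection matrix is large, and proving definiteness by hand (e.g. by exhibiting it as a plumbing tree or by Sylvester's criterion on principal minors) is error-prone. The conceptual shortcut via \cref{newlift} and the capping argument of Section~\ref{sec:psi2} is what makes this tractable, so the real work is to verify carefully that capping $\partial_2$ genuinely produces (after the Hurwitz moves and relabeling already justified by \cref{lem:hurwitz}) the $\phi_2^2$ word from \cite{SV}, and that the resulting lattice embedding is isometric onto a sublattice rather than merely an abstract injection. Once that embedding is pinned down, negative definiteness for the individual $k=1,\dots,7$ cases is immediate: each smaller $X_s$ has $b_2^0=0$ (rational homology sphere boundary) and a negative semi-definite form, hence a negative definite one, which is exactly the hypothesis needed to invoke \cref{prop:Laufer}.
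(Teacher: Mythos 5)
Your overall architecture matches the paper's: reduce everything to the single word $\psi_{\tilde{1}}^2\psi_1^2 = \tilde{I}^2\tau_{\partial_1}^2$, show the associated 4-manifold $X_0$ is negative semi-definite, and then get definiteness for each $k$ from the fact that the boundary 3-manifolds are rational homology spheres (so $b_2^0 = 0$). The reduction and the final step are fine. But both of your proposed arguments for the crucial middle step --- semi-definiteness of $Q_{X_0}$ --- are flawed, and this is a genuine gap.

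First, your ``cleanest route'' rests on the claim that a branched double cover along a braided/complex surface preserves negative semi-definiteness of the base. This is false in general: the Milnor fiber of $x^2 + f(y,z)$ is a double cover of the 4-ball (whose intersection form is trivially negative semi-definite) branched over a pushed-in braided surface, and such Milnor fibers very often have $b_2^+ > 0$. Indeed, the paper's whole setup is a cautionary example --- the smoothings of these very singularities are branched covers of the same annulus-fibered quotient, yet are not negative definite, which is exactly why the flatness of the deformation has to be policed by Laufer's theorem. Second, your ``more robust'' capping argument misidentifies the capped word: capping $\partial_2$ turns the 10-letter word $\psi_1^2$ into $(\tau_1\tau_1\tau_2\tau_3\tau_4)^2$ (this is Section~\ref{sec:psi2}, and it applies only to that subword), whereas the word for $X_0$ has 22 letters and caps to a 22-letter word on $\Sigma_{2,1}$, not to the $\phi_2^2$ word from \cite{SV}. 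Worse, no embedding of $X_0$ into a negative definite closed fibration is available: closing up $X_0$'s fibration produces $\CP\#15\bCP$, which has $b_2^+ = 1$.

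The paper's actual argument uses that closed fibration in a different way. It identifies the closed fibration of $I^2$ as $\CP\#13\bCP$, blows up twice to get $\CP\#15\bCP = X_0 \cup_{Y_\partial} N_0$ where $N_0$ is a neighborhood of the two sections and a regular fiber, computes $\sigma(N_0) = -1$, and applies Novikov additivity to get $\sigma(X_0) = -13$ exactly. Combining this with the handle-decomposition count $b_2(X_0) = 17$, $b_1(X_0) = 0$, and $b_2^0(X_0) = 4$ (from the boundary Seifert fibered space) forces $b_2^+(X_0) = 0$. So the key tool is an exact signature computation, not a lattice embedding into a definite form. Your fallback of computing the $22\times 22$ intersection matrix directly could in principle work, but you explicitly defer it, so as written the proposal does not establish the proposition.
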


\begin{proof} 
As discussed in the preamble to the proposition, let $X_0$ be the 4-manifold associated to the positive word $\psi_{\tilde{1}}^2 \psi_1^2 = \tilde{I}^2 \tau_{\partial_1}^2$.
Observe that we can embed $X_0$ into a closed Lefschetz fibration by capping both boundary components of $\Sigma_{2,2}$ and gluing in $\Sigma_2 \times D^2$. In doing this, each boundary component of $\Sigma_{2,2}$ yields a section of the corresponding fibration. Moreover, this fibration is the one associated to the positive word $\psi_{\tilde{1}}^2 \psi_1^2 = I^2 \tau_{\partial_2}^2$ thought of as acting on $\Sigma_2$ and it is also the fibration for $I^2$ blown up twice. The fibration $I^2$ produces the closed 4-manifold $\CP\#13 \bCP$ and from Section~\ref{sec:hyperelliptic} we see that this has two sections of square -1. To get $X_0$, we blow up this fibration twice along the section corresponding to $\partial_2$ and then remove the two sections and a regular fiber. Letting $N_0$ be a neighborhood of those three surfaces, we can summarize this discussion as $\CP\#15 \bCP = X_0 \cup_{Y_\partial} N_0$. The boundary 3-manifold $Y_\partial$ is a Seifert fibered space over $\Sigma_2$ with a single singular fiber and so has $b_1 = 4$. We check that $b_1(N_0) = 4$, $b_2(N_0) = 1$, $b_2^+(N_0) = 1$, and $b_2^-(N_0) = 2$, yielding $\sigma(N_0) = -1$. Since $\sigma(\CP\#15 \bCP ) = -14$, by Novikov additivity, we have $\sigma(X_0) = -13.$ The word associated to $X_0$ has length 22, so $X_0$ is built from $\Sigma_{2,2}\times D^2$ by attaching twenty-two four-dimensional 2-handles. Five of those handles kill the 1-handles coming from $\Sigma_{2,2}$ and the other seventeen contribute to $b_2(X_0) = 17.$ Since $b_1(X_0) = 0$, $b_2^0(X_0) = 4$ and since $\sigma(X_0) = -13$, the remaining $b_2$ must all be negative definite: $b_2^+(X_0)=0$ and $b_2^{-}(X_0)=13$. Thus each of the words $\psi_{\tilde{1}} \psi_1$, $\psi_{\tilde{1}} \psi_1^2$, $\psi_{\tilde{1}}^2$ and $\psi_{\tilde{1}}^2 \psi_1$ yields a negative semi-definite 4-manifold. Finally, since each of the 3-manifolds arising as the boundary of one of $t^k = x(x^3 - y^2)$ for $k=1,\dots,7$ are rational homology spheres, $b_2^0=0$ so these 4-manifolds must be negative definite.
\end{proof}

\bibliography{References}

\providecommand{\bysame}{\leavevmode\hbox to3em{\hrulefill}\thinspace}
\providecommand{\MR}{\relax\ifhmode\unskip\space\fi MR }
\providecommand{\MRhref}[2]{%
  \href{http://www.ams.org/mathscinet-getitem?mr=#1}{#2}
}
\providecommand{\href}[2]{#2}
\begin{thebibliography}{{N}{\'e}m00}

\bibitem[AK00]{AurouxKatzarkov}
Denis Auroux and Ludmil Katzarkov, \emph{Branched coverings of {${\bf C}{\rm
  P}^2$} and invariants of symplectic 4-manifolds}, Invent. Math. \textbf{142}
  (2000), no.~3, 631--673. \MR{1804164}

\bibitem[AM]{AM}
Anar Akhmedov and Naoyuki Monden, \emph{Geography of symplectic 4-manifolds
  admitting lefschetz fibrations and their indecomposability}, Journal of the
  Mathematical Society of Japan, in press.

\bibitem[BH71]{BirmanHilden}
Joan~S. Birman and Hugh~M. Hilden, \emph{On the mapping class groups of closed
  surfaces as covering spaces}, Advances in the {T}heory of {R}iemann
  {S}urfaces ({P}roc. {C}onf., {S}tony {B}rook, {N}.{Y}., 1969), Ann. of Math.
  Studies, No. 66, Princeton Univ. Press, Princeton, N.J., 1971, pp.~81--115.
  \MR{0292082}

\bibitem[BIT03]{IT}
T.~Ben-Itzhak and M.~Teicher, \emph{Properties of {H}urwitz equivalence in the
  braid group of order {$n$}}, J. Algebra \textbf{264} (2003), no.~1, 15--25.
  \MR{1980683}

\bibitem[Don98]{Donaldson}
S.~K. Donaldson, \emph{Lefschetz fibrations in symplectic geometry},
  Proceedings of the {I}nternational {C}ongress of {M}athematicians, {V}ol.
  {II} ({B}erlin, 1998), no. Extra Vol. II, 1998, pp.~309--314. \MR{1648081}

\bibitem[FM12]{FarbMargalit}
Benson Farb and Dan Margalit, \emph{A primer on mapping class groups},
  Princeton Mathematical Series, vol.~49, Princeton University Press,
  Princeton, NJ, 2012. \MR{2850125}

\bibitem[Ger01]{Gervais}
Sylvain Gervais, \emph{A finite presentation of the mapping class group of a
  punctured surface}, Topology \textbf{40} (2001), no.~4, 703--725.
  \MR{1851559}

\bibitem[Gom98]{Gompf}
Robert~E. Gompf, \emph{Handlebody construction of {S}tein surfaces}, Ann. of
  Math. (2) \textbf{148} (1998), no.~2, 619--693. \MR{1668563}

\bibitem[GS99]{GS}
Robert~E. Gompf and Andr\'{a}s~I. Stipsicz, \emph{{$4$}-manifolds and {K}irby
  calculus}, Graduate Studies in Mathematics, vol.~20, American Mathematical
  Society, Providence, RI, 1999. \MR{1707327}

\bibitem[Kaw12]{Keiko}
Keiko Kawamuro, \emph{The self-linking number in planar open book
  decompositions}, Math. Res. Lett. \textbf{19} (2012), no.~1, 41--58.
  \MR{2923175}

\bibitem[Kor02]{Korkmaz}
Mustafa Korkmaz, \emph{Low-dimensional homology groups of mapping class groups:
  a survey}, Turkish J. Math. \textbf{26} (2002), no.~1, 101--114. \MR{1892804}

\bibitem[Lau83]{Laufer-weak}
Henry~B. Laufer, \emph{Weak simultaneous resolution for deformations of
  {G}orenstein surface singularities}, Singularities, {P}art 2 ({A}rcata,
  {C}alif., 1981), Proc. Sympos. Pure Math., vol.~40, Amer. Math. Soc.,
  Providence, R.I., 1983, pp.~1--29. \MR{713236}

\bibitem[Lic64]{Lickorish}
W.~B.~R. Lickorish, \emph{A finite set of generators for the homeotopy group of
  a {$2$}-manifold}, Proc. Cambridge Philos. Soc. \textbf{60} (1964), 769--778.
  \MR{171269}

\bibitem[LP01]{LoiP}
Andrea Loi and Riccardo Piergallini, \emph{Compact {S}tein surfaces with
  boundary as branched covers of {$B^4$}}, Invent. Math. \textbf{143} (2001),
  no.~2, 325--348. \MR{1835390}

\bibitem[MMA11]{MatsumotoMontesinos-book}
Yukio Matsumoto and Jos\'{e}~Mar\'{\i}a Montesinos-Amilibia,
  \emph{Pseudo-periodic maps and degeneration of {R}iemann surfaces}, Lecture
  Notes in Mathematics, vol. 2030, Springer, Heidelberg, 2011. \MR{2839459}

\bibitem[Mum61]{mumford}
David Mumford, \emph{The topology of normal singularities of an algebraic
  surface and a criterion for simplicity}, Inst. Hautes \'{E}tudes Sci. Publ.
  Math. (1961), no.~9, 5--22. \MR{153682}

\bibitem[{N}{\'e}m99]{Nemethi-lectures}
A.~{N}{\'e}methi, \emph{Five lectures on normal surface singularities}, Low
  dimensional topology ({E}ger, 1996/{B}udapest, 1998), Bolyai Soc. Math.
  Stud., vol.~8, J\'{a}nos Bolyai Math. Soc., Budapest, 1999, With the
  assistance of \'{A}gnes Szil\'{a}rd and S\'{a}ndor Kov\'{a}cs, pp.~269--351.
  \MR{1747271}

\bibitem[{N}{\'e}m00]{Nemethi-algorithm}
Andr\'{a}s {N}{\'e}methi, \emph{Resolution graphs of some surface
  singularities. {I}. {C}yclic coverings}, Singularities in algebraic and
  analytic geometry ({S}an {A}ntonio, {TX}, 1999), Contemp. Math., vol. 266,
  Amer. Math. Soc., Providence, RI, 2000, pp.~89--128. \MR{1792151}

\bibitem[NU73a]{NamikawaUeno-list}
Yukihiko Namikawa and Kenji Ueno, \emph{The complete classification of fibres
  in pencils of curves of genus two}, Manuscripta Math. \textbf{9} (1973),
  143--186. \MR{369362}

\bibitem[NU73b]{NamikawaUeno-long}
\bysame, \emph{On fibres in families of curves of genus two. {I}. {S}ingular
  fibres of elliptic type}, Number theory, algebraic geometry and commutative
  algebra, in honor of {Y}asuo {A}kizuki, Kinokuniya, Tokyo, 1973,
  pp.~297--371. \MR{0384794}

\bibitem[Rud83]{Rudolph}
Lee Rudolph, \emph{Algebraic functions and closed braids}, Topology \textbf{22}
  (1983), no.~2, 191--202. \MR{683760}

\bibitem[SVHM23]{SV}
S\"umeyra Sakall{\i} and Jeremy Van Horn-Morris, \emph{Singular fibers in
  algebraic fibrations of genus 2 and their monodromy factorizations},
  preprint, arXiv:2303.01554 (2023).

\end{thebibliography}
\bibliographystyle{amsalpha}

\vspace{0.3in}
\end{document}